\let\TeXchi\chi
\newbox\chibox
\chibox \hbox{\raise\dp0 \box 0 }
\def\chi{\copy\chibox}
\newtheorem{proposition}{Proposition}[section]
\newtheorem{theorem}{Theorem}[section]
\newtheorem{definition}{Definition}[section]
\newtheorem{example}{Example}[section]
\newtheorem{lemma}{Lemma}[section]
\newtheorem{corollary}{Corollary}[section]
\newtheorem{remark}{Remark}[section]
\numberwithin{equation}{section}
\numberwithin{theorem}{section}
\numberwithin{definition}{section}
\numberwithin{example}{section}
\numberwithin{proposition}{section}
\numberwithin{lemma}{section}
\numberwithin{remark}{section}
\begin{document}
\title{On a new space consisting of norm maintaining functions}
\author
{Manuel Norman}
\date{}
\maketitle
\begin{abstract}
\noindent In this paper we define a new space, LH$(X,Y)$, consisting of functions $f\in X \subset Y$ (with $X,Y$ normed spaces) such that $\| f \| _X \equiv \| f \| _Y$ (where $\| \cdot \| _X$ is any norm on $X$, in general not the norm induced by $\| \cdot \| _Y$ on $X$). In Section 2 we study some properties involving LH and Schur's property, norm attainment and LHW (a weaker version of LH). In particular, one of the main results of this section states that strong and weak norm attainments together with some conditions imply that the considered function belongs to LHW or LH (depending on which conditions are satisfied). In Section 3 we renorm in a natural way the space $Y$ so that LH$(X,\widetilde{Y})=X$, obtaining an important extension Theorem.
\end{abstract}
\let\thefootnote\relax\footnote{Author: \textbf{Manuel Norman}; email: manuel.norman02@gmail.com\\
\textbf{AMS Subject Classification (2010)}: Primary: 47A30; Secondary: 46B20\\
\textbf{Key Words}: function space, normed space, norm maintainment, norm attainment}
\section{Introduction}
\noindent The principal aim of this paper is to study the space LH(X,Y), some of its properties and its relations with other known notions. We now briefly describe where the idea of this space comes from. First of all, recall that Lip$_0$($S_1$, $S_2$), where $S_1$, $S_2$ are Banach spaces, is the Banach space of all Lipschitz maps $f : S_1 \rightarrow S_2$ such that $f(0)=0$, endowed with the norm:
$$ \|f\|_{Lip_0}:= \sup \lbrace \frac{\|f(x)-f(y)\|_{S_2}}{\|x-y\|_{S_1}}, x \neq y \rbrace $$
(this norm represents the "best" Lipschitz constant for the considered map $f$). For a more detailed discussion on Lip$_0$, see for instance [1-7]. Furthermore, we can also define (for a fixed $\beta >0$) the space H\"ol$^{\beta}_0$($S_1$, $S_2$) (where $S_1$, $S_2$ are Banach) consisting of the $\beta$-H\"older maps $f$ from $S_1$ to $S_2$ such that $f(0)=0$. We can endow this space with the following norm:
$$ \|f\|_{H\ddot{o}l^{\beta}_0}:= \sup \lbrace \frac{\|f(x)-f(y)\|_{S_2}}{\|x-y\|^{\beta}_{S_1}}, x \neq y \rbrace $$
These spaces have many interesting properties: a really important one is norm attainment. The theory of operators that attain their norm has started to be developed many decades ago (see, for instance, [8]); recently, it has been started the study of norm attainment for Lipschitz maps. There are many important kinds of norm attainment in Lip$_0$ (which can actually be defined in a similar way also in H\"ol$^{\beta}_0$); see [1] for a detailed discussion. We will see in Section 2 that some kinds of norm attainment (in a generalised sense) are strictly related to our new space. Because of such important relations, taking inspiration from Lip$_0$ and H\"ol$^{\beta}_0$ we will call our new space LH: L is for Lipschitz, and H for H\"older.\\
The space LH(X,Y) consists, roughly speaking, of the functions in $X\subset Y$ whose norm on X is equal to the norm on Y. A simple example is given by $\mathbb{R}$ and $\mathbb{C}$, considered as normed spaces over $\mathbb{R}$ and with respect to the usual euclidean norm $| \cdot |$. Obviously, we have LH($\mathbb{R},\mathbb{C}$) $=\mathbb{R}$. We can also give another more general example: if $X \subset Y$ is normed under the norm induced by $\| \cdot \|_Y$ on X, then we will certainly have X=LH(X,Y). We will usually consider X not to be defined \textit{a priori} as the space normed under the induced norm, because it is a trivial case in this context. \textit{De facto}, if we prove that under some assumptions LH(X,Y)=X, then it means that the norm on X can be seen as the norm induced by Y on X, but this is because of some conditions that assure it, not by definition. In general, we always have X $\subseteq$ LH(X,Y). We now give:
\begin{definition}\label{Def:1.1}
Let $X$, $Y$ be normed spaces such that $X \subset Y$. A function $f\in X \subset Y$ is a norm maintaining function, and hence belongs to the space of norm maintaining functions LH(X,Y), if $\|  f \|_{X} \equiv \|  f \|_{Y}$.
\end{definition}
In this paper, when we talk about LH(X,Y), we tacitly assume (if not specified otherwise) that $X \subset Y$ are normed spaces (and, as said before, they are in general endowed with different norms; the case with X defined a priori as a normed space having the norm induced by Y will not be considered here) consisting of maps $f:S_1 \rightarrow S_2$, where $S_{1,2}$ are fixed (i.e. all the maps in $X$ and $Y$ are from the same $S_1$ to the same $S_2$) normed spaces.\\
The structure of this article is as follows:\\
$\bullet$ Section 2 is dedicated to some connections of LH to Schur's property, norm attainment (in a generalised sense) and LHW (a weaker version of LH)\\
$\bullet$ in Section 3 we prove one of the main results of this paper: an important extension Theorem, which states that under some conditions we can renorm in a natural way the space $Y$, obtaining $\widetilde{Y}$, so that $X=$ LH(X,$\widetilde{Y}$) (meaning that they have the same elements).
\section{Properties of LH(X,Y) and its relations with other notions}
In the first part of this section we introduce some concepts that will be then used to study some properties of LH.\\
We start by the following generalised version of norm attainment. For a detailed discussion on norm attainment for linear functionals and the different kinds of norm attainment for Lipschitz maps, see [1,8].
\begin{definition}\label{Def:2.1}
Consider a normed space X consisting of functions $T:S_1 \rightarrow S_2$, where $S_1$, $S_2$ are fixed normed spaces (i.e. they are the same for all the maps in X), such that the norm is either ( $\forall T\in X$):
\begin{equation}\label{Eq:2.1}
\| T \|_X := \sup_{(x,y)\in A} \delta_X (x,y,T(x),T(y))
\end{equation}
or
\begin{equation}\label{Eq:2.2}
\| T \|_X := \inf_{(x,y)\in A} \delta_X (x,y,T(x),T(y))
\end{equation}
where $A\subseteq S^2 _1$ is a fixed set and $\delta:A \times (\bigcup_T Im^2 (T))|_A \rightarrow [0,+\infty)$ is a fixed map ($Im (T)$ is the set consisting of all the images of $T$, $Im^2 (T):=Im(T) \times Im(T)$, $\bigcup_T Im^2 (T) \subseteq S^2 _2$ is the union of these couples of images over all the maps $T \in X$, and $(\bigcup_T Im^2 (T))|_A$ indicates that from the union of all these sets we exclude the couples of images corresponding to the couples $(x,y) \not \in A$ (i.e. if $(x,y) \not \in A$, then $(T(x),T(y))$ does not belong to $(\bigcup_T Im^2 (T))|_A$)). We say that a function $f\in X$ strongly attains its X-norm at some $(x_0, y_0) \in A$ if
\begin{equation}\label{Eq:2.3}
\| f \|_X = \delta_X (x_0,y_0,f(x_0),f(y_0))
\end{equation}
\end{definition}
Obviously, the function $\delta_X$ must be such that $\| \cdot \|_X$ is an actual norm.\\
We notice that, for example, the definition of strong norm attainment in Lip$_0$ is a particular case of Definition \ref{Def:2.1}. Indeed, let $A:=\lbrace (x,y) \in S^2 _1, \, x \neq y \rbrace \subset S^2 _1$ and, for $T \in$ Lip$_0(S_1,S_2)$:
$$ \delta_{Lip_0} (x,y,T(x),T(y)):= \frac{\|T(x)-T(y)\|_{S_2}}{\|x-y\|_{S_1}}$$
Then it is clear that 
$$\|T\|_{Lip_0}:= \sup_{(x,y) \in A} \delta_{Lip_0} (x,y,T(x),T(y))$$
and furthermore the definition of strong norm attainment in Lip$_0$ is a particular case of the one given above.\\
Later in this section, we will also deal with the following weaker version of LH:
\begin{definition}\label{Def:2.2}
$LHW(X,Y)$ is the space of maps $f\in X \subset Y$ (X,Y normed spaces) such that:\\
(i) $\| f \|_X \leq \| f \|_Y$\\
(ii) $\exists \lbrace f_n \rbrace$, $f_n \in Y$, such that $\forall \epsilon >0$:
\begin{equation}\label{Eq:2.4}
\| f_n \|_Y < \epsilon + \| f \|_X
\end{equation}
for $n$ enough large.
\end{definition}
We also consider the following weaker version of norm attainment. We notice that the definitions of norm attainment given in [1] can be generalised as we did for strong norm attainment; we will do it later for one case presented in the cited paper (see Definition \ref{Def:2.6}).
\begin{definition}\label{Def:2.3}
Consider a normed space X consisting of functions $T:S_1 \rightarrow S_2$, where $S_1$, $S_2$ are fixed, with norm given by either \eqref{Eq:2.1} or \eqref{Eq:2.2}. Consider a normed space $Y\supset X$ with a norm defined by any of the two said equations, but with the $\sup$ (or the $\inf$) over a set $B$ (where $A\subseteq B \subseteq S^2 _1$), and with $\delta_Y$ instead of $\delta_X$ (they can also be the same). We say that a function $f\in X \subset Y$ weakly attains its (X,Y)-norm at some $(x_0,y_0) \in A$ if $\exists \lbrace f_n \rbrace$, $f_n \in Y$, such that $\forall \epsilon>0$:
\begin{equation}\label{Eq:2.5}
| \| f_n \|_Y - \delta_X (x_0,y_0, f(x_0), f(y_0))|< \epsilon 
\end{equation}
for $n$ enough large.
\end{definition}
Before proving some results related to these concepts, we recall a well known property of normed spaces. In order to do this, we first need the following:
\begin{definition}\label{Def:2.4}
Let X be a topological vector space. We say that a sequence of points $\lbrace x_n \rbrace$ in X converges weakly to $x$, and we write $x_n \rightharpoonup x$, if it converges to $x$ in the weak topology.
\end{definition}
\begin{remark}\label{Rm:2.1}
{\normalfont We note the following important characterisation of weak convergence:}\\
\\
\textit{A net $(x_{\tau})$ in X converges in the weak topology to the element $x$ of X if and only if $\phi(x_{\tau})$ converges to $\phi(x)$ in $\mathbb{R}$ (or $\mathbb{C}$) for all $\phi$ in $X^*$ (the dual space of X).}\\
\\
{\normalfont Hence, a sequence $\lbrace x_n \rbrace$ (which is a particular case of net) converges in the weak topology to $x$ in X if and only if $\forall \phi \in X^*$:
\begin{align*}
\phi(x_{n}) \rightarrow \phi(x)
\end{align*}
where the convergence is here considered as the usual convergence in $\mathbb{R}$ or $\mathbb{C}$.}
\end{remark}
We can now define:
\begin{definition}\label{Def:2.5}
A normed space $X$ has Schur's property if whenever $\lbrace x_n \rbrace$ is a sequence in X such that $x_n \rightharpoonup x \in X$, then $\lim_{n \rightarrow +\infty} \| x_n - x \|_X =0$.
\end{definition}
Finally, we can start to prove some results connecting what we have introduced in this section. The following simple Proposition shows that LHW is indeed a weaker version of LH, because the latter one is always contained in the former.
\begin{proposition}\label{Prop:2.1}
LH(X,Y) $\subseteq$ LHW(X,Y).
\end{proposition}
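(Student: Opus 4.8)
The plan is to take an arbitrary $f\in$ LH$(X,Y)$ and verify the two defining conditions of LHW$(X,Y)$ from Definition \ref{Def:2.2} directly. By definition of LH$(X,Y)$ we have $\|f\|_X\equiv\|f\|_Y$; in particular $\|f\|_X\leq\|f\|_Y$, so condition (i) holds immediately.

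For condition (ii), the natural candidate for the approximating sequence is the constant sequence $f_n:=f$, which lies in $Y$ since $X\subset Y$. Then for every $\epsilon>0$ and every $n$ we have $\|f_n\|_Y=\|f\|_Y=\|f\|_X<\epsilon+\|f\|_X$, so \eqref{Eq:2.4} is satisfied for all $n$, in particular for $n$ large enough. Hence $f\in$ LHW$(X,Y)$, and since $f$ was arbitrary the inclusion LH$(X,Y)\subseteq$ LHW$(X,Y)$ follows.

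I do not expect any real obstacle here: the statement is essentially a bookkeeping check that the equality of norms is strictly stronger than the pair (i)--(ii). The only point worth keeping in mind is that the sequence in (ii) may be taken in the larger space $Y$ and is not required to converge to $f$ in any sense — only its $Y$-norms must eventually be bounded by $\|f\|_X+\epsilon$ — so the constant sequence trivially qualifies.
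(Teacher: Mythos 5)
Your proof is correct and coincides with the paper's own argument: both take the constant sequence $f_n\equiv f$ to witness condition (ii) and observe that $\|f\|_X\equiv\|f\|_Y$ trivially gives condition (i).
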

\begin{proof}
{\normalfont Let $f\in$ LH(X,Y), and take $f_n \equiv f$ $\forall n$. Then we have that $\| f_n \|_Y = \| f \|_Y = \| f \|_X$ and hence $\forall \epsilon >0$:
$$ \|f_n \|_Y = \|f\|_X < \epsilon + \|f \|_X $$
Furthermore, $\|f\|_X \leq \|f \|_Y$ (and actually they are equal) because $f$ belongs to LH(X,Y), and thus $f \in$ LHW(X,Y). Since $f$ was an arbitrary function in LH(X,Y), we conclude that LH(X,Y) $\subseteq$ LHW(X,Y).}
\end{proof}
The following well known result will be used to prove Lemma \ref{Lm:2.2}, which is an important tool for the next proofs.
\begin{lemma}\label{Lm:2.1}
$$ \| f -f _n \|_X \rightarrow 0 \Rightarrow \| f_n \|_X \rightarrow \| f \|_X  $$
\end{lemma}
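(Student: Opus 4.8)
The plan is to use the reverse triangle inequality, which holds in any normed space. First I would write down, for each $n$, the standard estimate
$$ \bigl| \, \|f_n\|_X - \|f\|_X \, \bigr| \leq \|f_n - f\|_X = \|f - f_n\|_X . $$
This follows from the two inequalities $\|f_n\|_X \leq \|f - f_n\|_X + \|f\|_X$ and $\|f\|_X \leq \|f - f_n\|_X + \|f_n\|_X$, each an instance of the triangle inequality applied after writing $f_n = (f_n - f) + f$ and $f = (f - f_n) + f_n$ respectively.

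Next I would invoke the hypothesis $\|f - f_n\|_X \to 0$. Given $\epsilon > 0$, there is $N$ such that $\|f - f_n\|_X < \epsilon$ for all $n \geq N$, and then the displayed inequality gives $\bigl| \|f_n\|_X - \|f\|_X \bigr| < \epsilon$ for all $n \geq N$. Since $\epsilon$ was arbitrary, this is exactly the statement that $\|f_n\|_X \to \|f\|_X$ in $\mathbb{R}$, which completes the argument.

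There is essentially no obstacle here: the result is the continuity of the norm functional, and the only ingredient is the triangle inequality. The one point worth stating carefully is that the reverse triangle inequality is valid in an arbitrary normed space (no completeness, no special structure of $X$ as a function space is needed), so the lemma applies verbatim in the generality in which it is stated. I would keep the proof to the two or three lines sketched above.
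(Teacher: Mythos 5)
Your proposal is correct and uses exactly the same idea as the paper, namely the reverse triangle inequality $\bigl|\,\|x_n\|-\|x\|\,\bigr|\le\|x_n-x\|$; you merely write out its derivation and the $\epsilon$-$N$ step in more detail than the paper does.
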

\begin{proof}
{\normalfont This easily follows from the fact that:
\begin{align*}
| \| x_n\| - \|x\| | \leq \| x_n - x \|
\end{align*}}
\end{proof}
\begin{lemma}\label{Lm:2.2}
Let $f \in$ LHW(X,Y), and suppose that $\exists \lbrace f_n \rbrace$ given by LHW (i.e. $\lbrace f_n \rbrace$, $f_n \in Y$, is a sequence satisfying property (ii) of Definition \ref{Def:2.2}) such that $\| f -f _n \|_Y \rightarrow 0$ as $n \rightarrow +\infty$. Then $f \in$ LH(X,Y).
\end{lemma}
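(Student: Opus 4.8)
The plan is to prove the reverse inequality $\|f\|_Y \le \|f\|_X$, since property (i) of Definition \ref{Def:2.2} already gives $\|f\|_X \le \|f\|_Y$ for $f \in$ LHW$(X,Y)$; the two inequalities together force $\|f\|_X \equiv \|f\|_Y$, which is precisely the defining condition for $f \in$ LH$(X,Y)$ in Definition \ref{Def:1.1}.

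First I would apply Lemma \ref{Lm:2.1}, but with the norm $\|\cdot\|_Y$ in place of $\|\cdot\|_X$: the statement and its one-line proof (via $|\,\|x_n\| - \|x\|\,| \le \|x_n - x\|$) are valid for any norm. Since the extra hypothesis gives $\|f - f_n\|_Y \to 0$, this yields $\|f_n\|_Y \to \|f\|_Y$.

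Next I would exploit that the sequence $\{f_n\}$ in the hypothesis is the very one furnished by property (ii) of LHW: for every $\epsilon > 0$ there is an $N$ with $\|f_n\|_Y < \epsilon + \|f\|_X$ for all $n \ge N$. Letting $n \to +\infty$ and using the limit from the previous step gives $\|f\|_Y \le \epsilon + \|f\|_X$; since $\epsilon > 0$ is arbitrary, $\|f\|_Y \le \|f\|_X$. Combining this with $\|f\|_X \le \|f\|_Y$ from (i) gives $\|f\|_X \equiv \|f\|_Y$, i.e. $f \in$ LH$(X,Y)$.

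The argument is essentially a two-line limit computation, so there is no serious obstacle; the only point requiring care is the bookkeeping observation that the sequence witnessing (ii) and the sequence in the added hypothesis are the \emph{same} $\{f_n\}$ (the statement explicitly demands this), so the inequality coming from (ii) and the convergence $\|f_n\|_Y \to \|f\|_Y$ genuinely apply to one and the same sequence.
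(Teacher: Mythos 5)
Your proof is correct and is essentially identical to the paper's: apply Lemma \ref{Lm:2.1} (for the $Y$-norm) to get $\|f_n\|_Y \rightarrow \|f\|_Y$, pass to the limit in the inequality from property (ii) to obtain $\|f\|_Y \leq \|f\|_X$, and combine with property (i). Your explicit remark that the witnessing sequence for (ii) and the convergent sequence must be the same $\lbrace f_n \rbrace$ is a point the paper leaves implicit, but the argument is the same.
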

\begin{proof}
{\normalfont By Lemma \ref{Lm:2.1}, we know that $\| f_n \|_Y \rightarrow \|f \|_Y$. This together with the fact that $\forall \epsilon >0$ and for big $n$:
$$ \|f_n\|_Y < \epsilon + \|f\|_X$$
implies that $\forall \epsilon >0$:
$$ \|f\|_Y \leq \epsilon + \|f\|_X$$
which gives: $\|f\|_Y \leq \|f\|_X$. Since $f \in$ LHW(X,Y), $\|f\|_Y \geq \|f\|_X$, and thus $f \in$ LH(X,Y).}
\end{proof}
We now derive an interesting relation between LH and Schur's property. We explicitely note that here with $f_n \rightharpoonup f$ we mean weak convergence with respect to Y.
\begin{proposition}\label{Prop:2.2}
Let $f \in$ LHW(X,Y), where Y has Schur's property. If $\exists \lbrace f_n \rbrace$ given by LHW such that $f_n \rightharpoonup f$, then $f \in$ LH(X,Y).
\end{proposition}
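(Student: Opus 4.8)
The plan is to combine the hypothesis that $Y$ has Schur's property with Lemma \ref{Lm:2.2}, so the argument is essentially a two-line reduction once the objects are lined up correctly. First I would observe that since $f \in$ LHW(X,Y) we have $f \in X \subset Y$, and that each $f_n$ supplied by LHW lies in $Y$ by Definition \ref{Def:2.2}. Thus $\{f_n\}$ is a sequence in $Y$ and $f$ is an element of $Y$.

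Next I would invoke the hypothesis $f_n \rightharpoonup f$, which (as emphasised just before the statement) is weak convergence in $Y$, i.e. convergence in the weak topology of $Y$ in the sense of Definition \ref{Def:2.4} and Remark \ref{Rm:2.1}. Applying Definition \ref{Def:2.5} with the normed space $Y$ in the role of "$X$": since $\{f_n\}$ is a sequence in $Y$ with $f_n \rightharpoonup f \in Y$, Schur's property of $Y$ yields
$$ \lim_{n \to +\infty} \| f_n - f \|_Y = 0 .$$

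Finally, the sequence $\{f_n\}$ is precisely a sequence given by LHW (it satisfies property (ii) of Definition \ref{Def:2.2}), and we have just shown $\| f - f_n \|_Y \to 0$. Hence the hypotheses of Lemma \ref{Lm:2.2} are met, and that lemma gives $f \in$ LH(X,Y), which is the desired conclusion.

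I do not expect any real obstacle here; the proof is a direct composition of Schur's property with Lemma \ref{Lm:2.2}. The only point requiring a little care is bookkeeping about \emph{which} space the weak convergence and the Schur property refer to: both must be read relative to $Y$, and one must check that the weak limit $f$ indeed belongs to $Y$ (it does, since $f \in X \subset Y$) so that Definition \ref{Def:2.5} applies with $Y$ in place of $X$.
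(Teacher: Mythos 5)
Your proof is correct and follows exactly the paper's argument: Schur's property of $Y$ converts the weak convergence $f_n \rightharpoonup f$ into $\|f - f_n\|_Y \to 0$, and Lemma \ref{Lm:2.2} then yields $f \in$ LH(X,Y). The extra bookkeeping you include about which space the weak convergence and Schur property refer to is sound but not a departure from the paper's route.
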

\begin{proof}
{\normalfont Since $f_n \rightharpoonup f$ and Y has Schur's property, $\| f -f _n \|_Y \rightarrow 0$. By applying Lemma \ref{Lm:2.2}, the result follows.}
\end{proof}
This leads to the following important result:
\begin{theorem}\label{Thm:2.1}
Suppose that $\forall f \in$ LHW(X,Y) (where Y has Schur's property) there exist a sequence $\lbrace f_n \rbrace$ given by LHW such that $f_n \rightharpoonup f$. Then LHW(X,Y)=LH(X,Y).
\end{theorem}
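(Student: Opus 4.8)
The plan is to establish the two inclusions LH$(X,Y)\subseteq$ LHW$(X,Y)$ and LHW$(X,Y)\subseteq$ LH$(X,Y)$ separately, after which the equality of the two spaces (as sets of functions) follows at once.

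For the first inclusion I would simply invoke Proposition \ref{Prop:2.1}, which has already been proved without any extra hypotheses; in particular it holds here, where $Y$ is assumed to have Schur's property. So the only work is the reverse inclusion.

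For the inclusion LHW$(X,Y)\subseteq$ LH$(X,Y)$ I would take an arbitrary $f\in$ LHW$(X,Y)$ and apply the standing hypothesis of the theorem: there exists a sequence $\lbrace f_n\rbrace$ given by LHW (i.e.\ satisfying property (ii) of Definition \ref{Def:2.2}) with $f_n\rightharpoonup f$, where the weak convergence is taken with respect to $Y$. Since $Y$ has Schur's property, this is exactly the situation covered by Proposition \ref{Prop:2.2}, whose hypotheses (namely $f\in$ LHW$(X,Y)$, $Y$ with Schur's property, and the existence of an LHW-sequence converging weakly to $f$) are all met. Hence Proposition \ref{Prop:2.2} yields $f\in$ LH$(X,Y)$. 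As $f$ was arbitrary in LHW$(X,Y)$, we obtain LHW$(X,Y)\subseteq$ LH$(X,Y)$, and combining with the first inclusion gives LHW$(X,Y)=$ LH$(X,Y)$.

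I do not expect a genuine obstacle here: the statement is essentially a packaging of Proposition \ref{Prop:2.1} and Proposition \ref{Prop:2.2}, the latter of which already absorbs the chain Schur's property $\Rightarrow$ $\|f-f_n\|_Y\to 0$ $\Rightarrow$ (via Lemma \ref{Lm:2.2} and Lemma \ref{Lm:2.1}) $f\in$ LH$(X,Y)$. The only point requiring a line of care is to confirm that the sequence supplied by the hypothesis is genuinely ``given by LHW,'' so that Proposition \ref{Prop:2.2} applies verbatim; this is built into the wording of the theorem and needs no further argument.
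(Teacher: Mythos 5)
Your proposal is correct and follows exactly the paper's argument: the paper's proof is a one-line appeal to Proposition \ref{Prop:2.1} for the inclusion LH$(X,Y)\subseteq$ LHW$(X,Y)$ and to Proposition \ref{Prop:2.2}, applied to every $f\in$ LHW$(X,Y)$ via the standing hypothesis, for the reverse inclusion. Your write-up is just a more explicit version of the same two-step reduction.
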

\begin{proof}
{\normalfont Use Proposition \ref{Prop:2.1}, Proposition \ref{Prop:2.2} and the fact that the assumptions are on every $f$ in LHW(X,Y).}
\end{proof}
We also state the following Theorem, which also considers (in the second part) a sort of "complementary situation" to LHW.
\begin{theorem}\label{Thm:2.2}
Let $f \in X \subset Y$. If $\exists \lbrace f_n \rbrace$, $f_n \in Y$, such that either:\\
(i) $\forall \epsilon >0$ and for big $n$: $\|f_n\|_Y < \epsilon + \|f\|_X$\\
(ii) $\|T\|_X \leq \|T\|_Y$ $\forall T \in X$\\
(iii) $\|f_n\|_Y \rightarrow \|f\|_Y$ as $n \rightarrow +\infty$\\
or:\\
(i') $\forall \epsilon >0$ and for big $n$: $\|f\|_X < \epsilon + \|f_n\|_Y$\\
(ii') $\|T\|_X \geq \|T\|_Y$ $\forall T \in X$\\
(iii') $\|f_n\|_Y \rightarrow \|f\|_Y$ as $n \rightarrow +\infty$\\
Then $f \in$ LH(X,Y).
\end{theorem}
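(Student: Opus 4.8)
The plan is to treat the two cases symmetrically: in each, I would extract from the hypotheses on the sequence $\lbrace f_n \rbrace$ the two opposite inequalities $\| f \|_X \le \| f \|_Y$ and $\| f \|_Y \le \| f \|_X$, which together give $\| f \|_X \equiv \| f \|_Y$, i.e. $f \in$ LH(X,Y).

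For the first case, I would first observe that (i) and (ii) say precisely that $f \in$ LHW(X,Y): indeed (ii) applied to $T = f$ is condition (i) of Definition \ref{Def:2.2}, and (i) here is condition (ii) of that definition. The one new ingredient compared with Lemma \ref{Lm:2.2} is that the convergence $\| f_n \|_Y \to \| f \|_Y$ is now assumed outright in (iii) rather than deduced from $\| f - f_n \|_Y \to 0$ via Lemma \ref{Lm:2.1}. So I would simply pass to the limit in (i): fixing $\epsilon > 0$ and letting $n \to \infty$, hypothesis (iii) gives $\| f \|_Y \le \epsilon + \| f \|_X$; since $\epsilon > 0$ was arbitrary, $\| f \|_Y \le \| f \|_X$. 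Combined with $\| f \|_X \le \| f \|_Y$ coming from (ii), this yields $\| f \|_X \equiv \| f \|_Y$. In effect this is the argument in the proof of Lemma \ref{Lm:2.2} with the appeal to Lemma \ref{Lm:2.1} removed.

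For the second, complementary case the two norms trade roles. Passing to the limit in (i') and using (iii') gives $\| f \|_X \le \epsilon + \| f \|_Y$ for every $\epsilon > 0$, hence $\| f \|_X \le \| f \|_Y$; and (ii') applied to $T = f$ gives $\| f \|_X \ge \| f \|_Y$. Again the two inequalities combine to $\| f \|_X \equiv \| f \|_Y$, so $f \in$ LH(X,Y). I do not expect a genuine obstacle here beyond bookkeeping: the only points requiring care are that the strict inequality in (i) (resp. (i')) survives the passage to the limit only as a non-strict inequality, and that (iii) (resp. (iii')) is exactly what licenses replacing $\| f_n \|_Y$ by $\| f \|_Y$ in that limit. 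I would also remark that, since no "reversed-inequality" analogue of LHW has been introduced, the second case has to be argued directly rather than by quoting an earlier lemma, but structurally it is the mirror image of the first.
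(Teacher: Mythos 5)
Your argument is correct and is essentially the paper's own proof: in the first case, (i) and (iii) are combined by passing to the limit to obtain $\|f\|_Y \leq \|f\|_X$, which together with (ii) applied to $T=f$ gives equality, and the second case is handled symmetrically. The paper merely states this more tersely, treating the second case as "similar" rather than writing it out.
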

\begin{proof}
{\normalfont Consider the first case (the proof of the second one is similar). Conditions (i) and (iii) imply that $\|f\|_Y \leq \|f\|_X$. This fact together with condition (ii) gives the result.}
\end{proof}
We now analyse an interesting connection between weak and strong norm attainments:
\begin{proposition}\label{Prop:2.3}
Suppose that $f \in$ LH(X,Y) weakly attains its (X,Y)-norm at some $(x_0,y_0)\in A$ and $\exists \lbrace f_n \rbrace$ given by the definition of weak norm attainment such that $\|f - f_n\|_Y \rightarrow 0$. Then $f$ strongly attains its X-norm at $(x_0,y_0)$.
\end{proposition}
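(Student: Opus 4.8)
The plan is to reduce both hypotheses to statements about convergence of the \emph{real} sequence $\|f_n\|_Y$, and then combine them with the defining identity of LH$(X,Y)$. First I would unpack the weak norm attainment hypothesis: by Definition \ref{Def:2.3}, for every $\epsilon>0$ we have $|\,\|f_n\|_Y-\delta_X(x_0,y_0,f(x_0),f(y_0))\,|<\epsilon$ for $n$ large, which is precisely the statement that $\|f_n\|_Y\to\delta_X(x_0,y_0,f(x_0),f(y_0))$ in $\mathbb{R}$ (here $(x_0,y_0)\in A$, so $\delta_X$ is evaluated at a legitimate argument and Definition \ref{Def:2.1} will apply at the end).

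Next I would use the extra assumption $\|f-f_n\|_Y\to 0$. Applying Lemma \ref{Lm:2.1} — which holds in any normed space, so in particular in $Y$ — gives $\|f_n\|_Y\to\|f\|_Y$. Since a sequence in $\mathbb{R}$ has a unique limit, comparing this with the first conclusion yields $\|f\|_Y=\delta_X(x_0,y_0,f(x_0),f(y_0))$.

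Finally, because $f\in$ LH$(X,Y)$, Definition \ref{Def:1.1} gives $\|f\|_X=\|f\|_Y$, hence $\|f\|_X=\delta_X(x_0,y_0,f(x_0),f(y_0))$, which is exactly condition \eqref{Eq:2.3}; so $f$ strongly attains its $X$-norm at $(x_0,y_0)$. I do not expect a real obstacle here — the argument is the same soft chaining used in Lemma \ref{Lm:2.2} and Proposition \ref{Prop:2.2}. The only points needing a moment's care are that Lemma \ref{Lm:2.1} is being invoked for the norm of $Y$ (fine, since it is norm-agnostic) and that the uniqueness of limits is being used in $\mathbb{R}$ (or $\mathbb{C}$), where the values $\|f_n\|_Y$ and $\delta_X(\cdots)$ indeed live.
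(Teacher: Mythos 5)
Your proposal is correct and follows essentially the same route as the paper: apply Lemma \ref{Lm:2.1} to get $\|f_n\|_Y\to\|f\|_Y$, combine with the weak-attainment convergence $\|f_n\|_Y\to\delta_X(x_0,y_0,f(x_0),f(y_0))$ and the identity $\|f\|_X=\|f\|_Y$ from LH$(X,Y)$. The paper phrases the last step as an $\epsilon$-inequality rather than explicitly invoking uniqueness of limits, but the argument is the same.
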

\begin{proof}
{\normalfont By Lemma \ref{Lm:2.1}, we know that $\|f_n\|_Y \rightarrow \|f\|_Y$, and since $f\in$ LH(X,Y) we have that $\|f\|_Y \equiv \|f\|_X$. By definition of weak norm attainment, these imply that $\forall \epsilon >0$:
\begin{align*}
-\epsilon \leq \|f\|_X - \delta_X(x_0,y_0,f(x_0),f(y_0)) \leq \epsilon
\end{align*}
This gives $\|f\|_X = \delta_X(x_0,y_0,f(x_0),f(y_0))$, which concludes the proof.}
\end{proof}
We now prove that weak and strong norm attainments for $f$ together imply, under the condition that $\|f\|_X \leq \|f\|_Y$, that $f$ belongs to LHW. If it is added a certain convergence condition, we obtain that $f$ belongs to LH. This important result is a useful way to prove that a function belongs to LHW or LH, starting from norm attainment.
\begin{theorem}\label{Thm:2.3}
Suppose that $f \in X \subset Y$ strongly attains its X-norm at some $(x_0,y_0) \in A$ and also weakly attains its (X,Y)-norm at the same $(x_0,y_0)$. If $\|f\|_X \leq \|f\|_Y$, then $f \in$ LHW(X,Y). If furthermore $\exists \lbrace f_n \rbrace$ given by the definition of weak norm attainment such that $\|f_n\|_Y \rightarrow \|f\|_Y$, then $f \in $ LH(X,Y).
\end{theorem}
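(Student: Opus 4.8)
The plan is straightforward: combine the strong norm attainment hypothesis with the defining inequality of weak norm attainment, so that a single sequence $\{f_n\}$ does double duty — as the sequence required by Definition \ref{Def:2.2} (for the first conclusion) and as the sequence appearing in the proof of Lemma \ref{Lm:2.2} (for the second conclusion).

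First I would record that strong attainment of the $X$-norm at $(x_0,y_0)$ means, by \eqref{Eq:2.3}, that $\delta_X(x_0,y_0,f(x_0),f(y_0)) = \|f\|_X$. Next, let $\{f_n\}$ be a sequence in $Y$ witnessing that $f$ weakly attains its $(X,Y)$-norm at $(x_0,y_0)$; by \eqref{Eq:2.5} together with the previous identity, for every $\epsilon>0$ and all sufficiently large $n$ we have $\bigl|\,\|f_n\|_Y - \|f\|_X\,\bigr| < \epsilon$, hence in particular $\|f_n\|_Y < \epsilon + \|f\|_X$. This is exactly property (ii) of Definition \ref{Def:2.2}, while property (i) is the standing hypothesis $\|f\|_X \leq \|f\|_Y$; therefore $f \in$ LHW$(X,Y)$.

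For the second statement I would assume in addition that this same sequence satisfies $\|f_n\|_Y \to \|f\|_Y$, and then repeat the argument in the proof of Lemma \ref{Lm:2.2}: letting $n\to\infty$ in $\|f_n\|_Y < \epsilon + \|f\|_X$ yields $\|f\|_Y \leq \epsilon + \|f\|_X$ for every $\epsilon>0$, so $\|f\|_Y \leq \|f\|_X$; combined with $\|f\|_X \leq \|f\|_Y$ this forces $\|f\|_X \equiv \|f\|_Y$, i.e.\ $f \in$ LH$(X,Y)$.

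I do not expect a genuine obstacle here. The one point worth noting is that Lemma \ref{Lm:2.2} is phrased with the hypothesis $\|f - f_n\|_Y \to 0$, which is formally stronger than what is assumed in the present theorem; but the proof of that lemma uses that hypothesis only via Lemma \ref{Lm:2.1} to extract $\|f_n\|_Y \to \|f\|_Y$, so having the latter directly is enough, and one can simply reproduce the short argument rather than invoke the lemma verbatim. Otherwise the proof is pure bookkeeping with \eqref{Eq:2.3}, \eqref{Eq:2.5}, Definition \ref{Def:2.2}, and the proof of Lemma \ref{Lm:2.2}.
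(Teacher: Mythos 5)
Your proposal is correct and follows essentially the same route as the paper: substitute the strong-attainment identity $\delta_X(x_0,y_0,f(x_0),f(y_0))=\|f\|_X$ into the weak-attainment inequality \eqref{Eq:2.5} to get $\|f_n\|_Y<\epsilon+\|f\|_X$, which with $\|f\|_X\leq\|f\|_Y$ gives membership in LHW$(X,Y)$, and then pass to the limit using $\|f_n\|_Y\to\|f\|_Y$ for the LH conclusion. Your remark that one only needs the conclusion $\|f_n\|_Y\to\|f\|_Y$ rather than the full hypothesis of Lemma \ref{Lm:2.2} matches what the paper implicitly does.
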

\begin{proof}
{\normalfont We know that $\exists \lbrace f_n \rbrace$ such that $\forall \epsilon >0$ and for $n$ enough large:
$$ | \|f_n\|_Y - \delta_X (x_0,y_0,f(x_0),f(y_0))| < \epsilon $$
By strong norm attainment, we have that $\delta_X (x_0,y_0,f(x_0),f(y_0)) = \|f\|_X$. Therefore, we get:
$$ -\epsilon + \|f\|_X < \|f_n\|_Y < \epsilon + \|f\|_X $$
which, together with the fact that $\|f\|_X \leq \|f\|_Y$, implies that $f \in$ LHW(X,Y). If furthermore there exists a sequence given by the definition of weak norm attainment that satisfies the condition $\|f_n\|_Y \rightarrow \|f\|_Y$, we can conclude (by what we have just proved) that $\forall \epsilon >0$:
$$ -\epsilon + \|f\|_X \leq \|f\|_Y \leq \epsilon + \|f\|_X$$
from which we get: $f \in$ LH(X,Y).}
\end{proof}
\begin{remark}\label{Rm:2.2}
{\normalfont We note that the second conclusion of Theorem \ref{Thm:2.3} does not actually need the assumption that $\|f\|_X \leq \|f\|_Y$, which only assures that $f \in$ LHW(X,Y). We can therefore remove this condition in case we had a sequence $\lbrace f_n \rbrace$ given by the definition of weak norm attainment such that $\|f_n \|_Y \rightarrow \|f\|_Y$. Actually, this inequality will follow from the conclusion itself, since for a function $f\in X \subset Y$ we have: $f \in$ LH(X,Y) $\Leftrightarrow$ $\|f\|_X \leq \|f\|_Y$ $\wedge$ $\|f\|_Y \leq \|f\|_X$ (where $\wedge$ is the logical operator 'and').}
\end{remark}
The following interesting Corollary easily follows (the above Remark still holds):
\begin{corollary}\label{Crl:2.1}
Let X $\subset$ Y be normed spaces, and suppose that whenever $f \in X$, there exists $(x_0, y_0) \in A$ such that $f$ strongly attains its X-norm at $(x_0, y_0)$ and also weakly attains its (X,Y)-norm at $(x_0, y_0)$. Then, if $\|f\|_X \leq \|f\|_Y$ $\forall f \in X$, LHW(X,Y)=X. If furthermore $\forall f \in$ X $\exists \lbrace f_n \rbrace$ given by the definition of weak norm attainment such that $\|f_n\|_Y \rightarrow \|f\|_Y$, then LH(X,Y)=LHW(X,Y)=X. 
\end{corollary}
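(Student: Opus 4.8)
The plan is to obtain the two equalities as immediate consequences of Theorem \ref{Thm:2.3} applied pointwise, combined with the trivial set inclusions coming directly from the definitions. First I would record that, by Definition \ref{Def:2.2}, any element of LHW(X,Y) is in particular a function belonging to X, so LHW(X,Y) $\subseteq$ X; similarly, by Definition \ref{Def:1.1}, LH(X,Y) $\subseteq$ X. Hence in each of the two assertions it only remains to prove the reverse inclusion.

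For the first assertion I would fix an arbitrary $f \in X$. By hypothesis there is a pair $(x_0, y_0) \in A$ at which $f$ both strongly attains its X-norm and weakly attains its (X,Y)-norm, and by assumption $\|f\|_X \leq \|f\|_Y$; these are exactly the hypotheses of the first conclusion of Theorem \ref{Thm:2.3}, so $f \in$ LHW(X,Y). Since $f \in X$ was arbitrary, X $\subseteq$ LHW(X,Y), and together with the inclusion recorded above this yields LHW(X,Y)=X.

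For the second assertion I would again fix $f \in X$. The extra hypothesis now supplies, for this particular $f$, a sequence $\{ f_n \}$ of the type produced by the definition of weak norm attainment with $\|f_n\|_Y \rightarrow \|f\|_Y$; this is precisely the additional ingredient needed for the second conclusion of Theorem \ref{Thm:2.3}, so $f \in$ LH(X,Y). As $f$ was arbitrary, X $\subseteq$ LH(X,Y). Chaining the inclusions X $\subseteq$ LH(X,Y) $\subseteq$ LHW(X,Y) $\subseteq$ X, where the middle one is Proposition \ref{Prop:2.1} and the last is the trivial one noted above, we conclude LH(X,Y)=LHW(X,Y)=X.

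Since the argument is essentially bookkeeping layered on top of Theorem \ref{Thm:2.3}, I do not expect a genuine obstacle; the one point deserving care is that each invocation of Theorem \ref{Thm:2.3} must use a sequence that is genuinely ``given by the definition of weak norm attainment'' for the specific $f$ at hand, rather than an arbitrary approximating sequence --- which is exactly what the hypotheses of the corollary guarantee for every $f \in X$. One may also observe, as in Remark \ref{Rm:2.2}, that the hypothesis $\|f\|_X \leq \|f\|_Y$ could be dropped in the second assertion, since it then follows a posteriori.
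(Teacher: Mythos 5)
Your proposal is correct and follows essentially the same route as the paper: both arguments apply Theorem \ref{Thm:2.3} to each $f \in X$ to get the inclusions $X \subseteq$ LHW(X,Y) (resp.\ $X \subseteq$ LH(X,Y)), and then combine these with the trivial inclusions LH(X,Y) $\subseteq$ LHW(X,Y) $\subseteq$ X coming from the definitions. Your write-up simply makes explicit the bookkeeping that the paper's one-line proof leaves implicit.
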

\begin{proof}
{\normalfont Just use the above Theorem, noting that the conclusions hold $\forall f \in$ X. Obviously, since in general LH(X,Y) $\subseteq$ LHW(X,Y) $\subseteq$ X and in this (second) case we have LH(X,Y)=X, we also have LHW(X,Y)=LH(X,Y)=X.}
\end{proof}
As promised, we now generalise a definition of norm attainment given in [1] and we prove a similar result to Theorem \ref{Thm:2.3}.
\begin{definition}\label{Def:2.6}
Consider a normed space X consisting of functions $T:S_1 \rightarrow S_2$, where $S_1$, $S_2$ are fixed normed spaces, such that the norm is given by either \eqref{Eq:2.1} or \eqref{Eq:2.2}. Suppose that $\delta_X$ is defined in the following way (obviously, it must also be such that $\| \cdot \|_X$ is an actual norm):
\begin{equation}\label{Eq:2.6}
\delta_X(x,y,T(x),T(y)):=\| \widetilde{\delta}_X (x,y,T(x),T(y)) \|_{S_2}
\end{equation}
where $\widetilde{\delta}_X: A \times (\bigcup_T Im^2 (T))|_A \rightarrow S_2$. Then, we say that a function $f\in X$ attains its X-norm towards a point $z \in S_2$ if there exists a sequence $\lbrace (x_n, y_n) \rbrace$, $(x_n,y_n) \in A$, such that:
\begin{equation}\label{Eq:2.7}
\widetilde{\delta}_X (x_n,y_n,f(x_n),f(y_n)) \rightarrow z \quad (in \, S_2)
\end{equation}
and $\|f\|_X=\|z\|_{S_2}$.\\
We say that $f \in X$ weakly attains its (X,Y)-norm towards a point $z \in S_2$ if there exist $\lbrace (x_n, y_n) \rbrace$, $(x_n,y_n) \in A$, and $\lbrace f_n \rbrace$, $f_n \in Y$, such that:
\begin{equation}\label{Eq:2.8}
\widetilde{\delta}_X (x_n,y_n,f(x_n),f(y_n)) \rightarrow z \quad (in \, S_2)
\end{equation}
and $\|f_n\|_Y \rightarrow \|z\|_{S_2}$.
\end{definition}
Again, as for Definition \ref{Def:2.1}, we notice that the definition in Lip$_0$ of norm attainment towards a point is a particular case of Definition \ref{Def:2.6}.\\
When we deal with norm attainment towards a point, we will always tacitly assume that $\delta_X(x,y,T(x),T(y)):=\| \widetilde{\delta}_X (x,y,T(x),T(y))\|_{S_2}$ (this note is important in particular if we consider at the same time several kinds of norm attainment).\\
The following Proposition can be easily proved:
\begin{proposition}\label{Prop:2.4}
Suppose that $f \in$ LH(X,Y) attains its X-norm towards some $z\in S_2$. Then $f$ weakly attains its (X,Y)-norm towards the same $z$.
\end{proposition}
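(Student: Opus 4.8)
The plan is to use exactly the sequence $\{(x_n, y_n)\}$ supplied by the hypothesis that $f$ attains its $X$-norm towards $z$, and to take the auxiliary sequence $\{f_n\}$ required by the definition of weak norm attainment to be the constant sequence $f_n \equiv f$. Since $X \subset Y$, this constant sequence lies in $Y$, so it is an admissible choice; the definition of weakly attaining the $(X,Y)$-norm towards a point does not ask the $f_n$ to be distinct from $f$ or from one another.

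First I would unwind the hypothesis: because $f \in$ LH(X,Y) attains its $X$-norm towards $z \in S_2$, Definition \ref{Def:2.6} provides a sequence $\{(x_n, y_n)\}$ with $(x_n, y_n) \in A$ such that $\widetilde{\delta}_X(x_n,y_n,f(x_n),f(y_n)) \to z$ in $S_2$ and $\|f\|_X = \|z\|_{S_2}$. The convergence $\widetilde{\delta}_X(x_n,y_n,f(x_n),f(y_n)) \to z$ is precisely condition \eqref{Eq:2.8}, the first of the two requirements for weakly attaining the $(X,Y)$-norm towards $z$, and it holds with the very same pairs $(x_n,y_n)$.

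Next I would verify the remaining condition $\|f_n\|_Y \to \|z\|_{S_2}$. With $f_n \equiv f$ we have $\|f_n\|_Y = \|f\|_Y$ for every $n$. Since $f \in$ LH(X,Y), Definition \ref{Def:1.1} gives $\|f\|_Y \equiv \|f\|_X$, and combining this with $\|f\|_X = \|z\|_{S_2}$ from the previous step yields $\|f_n\|_Y = \|z\|_{S_2}$ for all $n$; in particular the constant sequence $\|f_n\|_Y$ converges to $\|z\|_{S_2}$. Together with the convergence already noted, this shows that $f$ weakly attains its $(X,Y)$-norm towards $z$.

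I do not expect any genuine obstacle: the argument is a one-line consequence of the norm identity $\|f\|_X = \|f\|_Y$ guaranteed by membership in LH(X,Y), plus the observation that the sequence $\{(x_n,y_n)\}$ can simply be recycled. The only point to state carefully is the legitimacy of the constant choice $f_n \equiv f$, which holds because $X \subset Y$.
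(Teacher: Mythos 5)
Your proposal is correct and is essentially identical to the paper's own proof: both take the constant sequence $f_n \equiv f$, recycle the sequence $\{(x_n,y_n)\}$ from $X$-norm attainment towards $z$, and use the chain $\|f_n\|_Y = \|f\|_Y = \|f\|_X = \|z\|_{S_2}$ coming from membership in LH$(X,Y)$. Nothing further is needed.
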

\begin{proof}
Take $f_n \equiv f$ $\forall n$. By strong norm attainment towards $z$, and by the fact that $f$ belongs to LH(X,Y), we have: $\|f_n\|_Y = \|f\|_Y  = \|f\|_X = \|z\|_{S_2}$. Thus we can say that $\|f_n\|_Y \rightarrow \|z\|_{S_2}$, and since we can take the same sequence $\lbrace (x_n,y_n) \rbrace$ given by X-norm attainment towards $z$, the conclusion follows.
\end{proof}
Furthermore, there is an important link between strong norm attainment and norm attainment towards a point. Indeed, the space consisting of all the maps that strongly attain their X-norm at some point is included in the space of the maps that attain their X-norm towards some point (and similarly for weak norm attainment). See also the discussion before Proposition \ref{Prop:2.4}.
\begin{proposition}\label{Prop:2.5}
If $f\in X$ strongly attains its X-norm at some point $(x_0,y_0)\in A$, then it attains its X-norm towards $\widetilde{\delta}_X (x_0,y_0,f(x_0),f(y_0))$. The result also holds if we consider $f \in X \subset Y$ and we use weak norm attainment and weak norm attainment towards $\widetilde{\delta}_X (x_0,y_0,f(x_0),f(y_0))$.
\end{proposition}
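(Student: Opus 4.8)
The plan is to exhibit the sequence required by Definition \ref{Def:2.6} as a \emph{constant} sequence, and then to read off the norm equality directly from the attainment hypothesis. Concretely, set $z := \widetilde{\delta}_X(x_0,y_0,f(x_0),f(y_0)) \in S_2$ and consider the sequence $\lbrace (x_n,y_n) \rbrace$ defined by $(x_n,y_n):=(x_0,y_0)$ for every $n$; since $(x_0,y_0)\in A$, this is an admissible sequence in $A$. Recall also that, by the standing convention accompanying Definition \ref{Def:2.6}, we have $\delta_X(x,y,T(x),T(y)) = \| \widetilde{\delta}_X(x,y,T(x),T(y)) \|_{S_2}$.

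For the first (strong) assertion I would first observe that, with this constant choice, $\widetilde{\delta}_X(x_n,y_n,f(x_n),f(y_n)) = z$ for every $n$, so trivially $\widetilde{\delta}_X(x_n,y_n,f(x_n),f(y_n)) \to z$ in $S_2$. It remains to verify $\|f\|_X = \|z\|_{S_2}$. Using strong norm attainment of $f$ at $(x_0,y_0)$ (Definition \ref{Def:2.1}) together with the above convention, one gets $\|f\|_X = \delta_X(x_0,y_0,f(x_0),f(y_0)) = \| \widetilde{\delta}_X(x_0,y_0,f(x_0),f(y_0)) \|_{S_2} = \|z\|_{S_2}$. By Definition \ref{Def:2.6}, this is exactly the statement that $f$ attains its X-norm towards $z$.

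For the weak assertion I would keep the same constant sequence $\lbrace (x_n,y_n) \rbrace$ (so again $\widetilde{\delta}_X(x_n,y_n,f(x_n),f(y_n)) \to z$ in $S_2$) and then bring in the sequence $\lbrace f_n \rbrace$, $f_n \in Y$, supplied by weak norm attainment of $f$ at $(x_0,y_0)$. By Definition \ref{Def:2.3}, for every $\epsilon>0$ we have $| \|f_n\|_Y - \delta_X(x_0,y_0,f(x_0),f(y_0)) | < \epsilon$ for $n$ large, i.e. $\|f_n\|_Y \to \delta_X(x_0,y_0,f(x_0),f(y_0)) = \|z\|_{S_2}$ by the same computation as above. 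Hence $\lbrace (x_n,y_n) \rbrace$ and $\lbrace f_n \rbrace$ witness, via Definition \ref{Def:2.6}, that $f$ weakly attains its (X,Y)-norm towards $z$.

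I do not expect a genuine obstacle: the content is entirely definitional, and the only points needing care are to make explicit that we are invoking the standing convention $\delta_X = \|\widetilde{\delta}_X\|_{S_2}$ from Definition \ref{Def:2.6} (without which ``attainment towards a point'' is not even defined), and to note that the definitions of attainment towards a point permit constant sequences of pairs $(x_n,y_n)$, which is precisely what makes both implications immediate.
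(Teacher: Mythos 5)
Your proposal is correct and follows essentially the same route as the paper: both take the constant sequence $\lbrace (x_0,y_0) \rbrace$, invoke the standing convention $\delta_X = \| \widetilde{\delta}_X \|_{S_2}$, and read off the required norm identities directly from the definitions of strong and weak norm attainment. No gaps.
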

\begin{proof}
Take the constant sequence $\lbrace (x_0,y_0) \rbrace$. We know that (as already said, we assume $\delta_X(x,y,T(x),T(y)):=\| \widetilde{\delta}_X (x,y,T(x),T(y))\|_{S_2}$ because we are dealing also with weak norm attainment towards a point):
$$\|f\|_X=\delta_X(x_0,y_0,f(x_0),f(y_0))=\| \widetilde{\delta}_X (x_0,y_0,f(x_0),f(y_0)) \|_{S_2}$$
Since obviously $\widetilde{\delta}_X (x_0,y_0,f(x_0),f(y_0))$ tends to itself, by the first and last terms of the above equality we can conclude that $f$ attains its X-norm towards $\widetilde{\delta}_X (x_0,y_0,f(x_0),f(y_0))$.\\
For the second part of the Proposition, first note that, $\forall \epsilon >0$ and for big $n$:
$$ | \|f_n\|_Y - \delta_X(x_0,y_0,f(x_0),f(y_0))|=|\|f_n\|_Y - \| \widetilde{\delta}_X(x_0,y_0,f(x_0),f(y_0))\|_{S_2} | < \epsilon $$
Then obviously $\|f_n\|_Y \rightarrow \| \widetilde{\delta}_X(x_0,y_0,f(x_0),f(y_0))\|_{S_2}$, and since $\widetilde{\delta}_X(x_0,y_0,f(x_0),f(y_0))$ tends to itself, the Proposition follows by taking the constant sequence $\lbrace (x_0,y_0) \rbrace$.
\end{proof}
We now prove a basic result (of course, $\delta_X(x,y,T(x),T(y)):=$ \\
$\| \widetilde{\delta}_X (x,y,T(x),T(y))\|_{S_2}$ and $\delta_Y(x,y,T(x),T(y)):=\| \widetilde{\delta}_Y (x,y,T(x),T(y))\|_{S_2}$):
\begin{proposition}\label{Prop:2.6}
If $f\in X \subset Y$ attains its X-norm towards some $z \in S_2$ and also attains its Y-norm towards the same $z$, then $f \in$ LH(X,Y).
\end{proposition}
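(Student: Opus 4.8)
The plan is to read off both conclusions directly from Definition \ref{Def:2.6}, applied once to $X$ and once to $Y$, and then simply compare the two resulting equalities.

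First I would unpack the hypothesis that $f$ attains its $X$-norm towards $z$. By Definition \ref{Def:2.6} this provides a sequence $\lbrace (x_n, y_n) \rbrace$ with $(x_n, y_n) \in A$ and $\widetilde{\delta}_X(x_n, y_n, f(x_n), f(y_n)) \to z$ in $S_2$, but the part I actually need is the accompanying norm equality $\|f\|_X = \|z\|_{S_2}$. Next I would do the same with the hypothesis that $f$ attains its $Y$-norm towards the same $z$: applying Definition \ref{Def:2.6} with $Y$ (together with its set $B$, its map $\widetilde{\delta}_Y$, and the convention $\delta_Y(x,y,T(x),T(y)) := \|\widetilde{\delta}_Y(x,y,T(x),T(y))\|_{S_2}$ recalled just before the statement) in place of $X$, I obtain a sequence $\lbrace (x_n', y_n') \rbrace$ in $B$ with $\widetilde{\delta}_Y(x_n', y_n', f(x_n'), f(y_n')) \to z$, and, crucially, $\|f\|_Y = \|z\|_{S_2}$.

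The final step is the transitive comparison: since $\|f\|_X = \|z\|_{S_2}$ and $\|f\|_Y = \|z\|_{S_2}$, we conclude $\|f\|_X = \|f\|_Y$, which is exactly the condition in Definition \ref{Def:1.1} for $f$ to lie in LH$(X,Y)$. (One could, if desired, phrase this as checking both inequalities $\|f\|_X \le \|f\|_Y$ and $\|f\|_Y \le \|f\|_X$ in the spirit of Remark \ref{Rm:2.2}, but the single equality is immediate.)

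I do not expect any genuine obstacle here; the content is entirely in correctly interpreting "attains its $Y$-norm towards $z$" as Definition \ref{Def:2.6} with the roles of $X$ and $Y$ exchanged, after which the approximating sequences $\lbrace(x_n,y_n)\rbrace$ and $\lbrace f_n\rbrace$ are irrelevant to the argument — the whole proof rests on the two scalar identities with $\|z\|_{S_2}$. The only mild care needed is to note, as the paragraph preceding the Proposition already does, that both $\delta_X$ and $\delta_Y$ are assumed to be of the norm-of-$\widetilde{\delta}$ form, so that "$\|f\|_X = \|z\|_{S_2}$" and "$\|f\|_Y = \|z\|_{S_2}$" are the correct readings of the two hypotheses.
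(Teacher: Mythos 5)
Your proof is correct and is essentially identical to the paper's: both simply extract the two equalities $\|f\|_X=\|z\|_{S_2}$ and $\|f\|_Y=\|z\|_{S_2}$ from the definition of norm attainment towards $z$ and conclude $\|f\|_X=\|f\|_Y$. The extra care you take in spelling out the definition applied to $Y$ is harmless but not needed beyond what the paper records.
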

\begin{proof}
By definition of norm attainment towards $z$, we have that $\|f\|_X=\|z\|_{S_2}$, and $\|f\|_Y=\|z\|_{S_2}$. The Proposition follows.
\end{proof}
The following Proposition gives an interesting way to prove that there exist sequences such that $\|f_n\|_Y \rightarrow \|f\|_Y$:
\begin{proposition}\label{Prop:2.7}
Suppose that $f\in X \subset Y$ attains its Y-norm towards some $z \in S_2$ and also weakly attains its (X,Y)-norm towards the same $z$. Then every sequence $\lbrace f_n \rbrace $ given by the definition of weak norm attainment towards $z$ is such that $\|f_n\|_Y \rightarrow \|f\|_Y$.
\end{proposition}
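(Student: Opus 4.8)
The plan is to unwind the two hypotheses into the conditions that define them and observe that they fit together immediately, in the same spirit as the proof of Proposition \ref{Prop:2.6}. Recall from Definition \ref{Def:2.6} that saying $f$ weakly attains its $(X,Y)$-norm towards $z$ means precisely that there exist $\lbrace (x_n,y_n) \rbrace$ in $A$ and $\lbrace f_n \rbrace$ in $Y$ with $\widetilde{\delta}_X(x_n,y_n,f(x_n),f(y_n)) \rightarrow z$ in $S_2$ and $\| f_n \|_Y \rightarrow \| z \|_{S_2}$; and, by the analogous version of the same definition applied to $Y$, saying that $f$ attains its $Y$-norm towards $z$ gives in particular $\| f \|_Y = \| z \|_{S_2}$.

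First I would extract from the second hypothesis the single fact that we need, namely the equality $\| f \|_Y = \| z \|_{S_2}$ (the associated sequence $\lbrace (x_n,y_n) \rbrace$ and the convergence of $\widetilde{\delta}_Y$ play no further role). Next I would take an arbitrary sequence $\lbrace f_n \rbrace$ given by the definition of weak norm attainment towards $z$: by that very definition it comes paired with a suitable $\lbrace (x_n,y_n) \rbrace$ and satisfies $\| f_n \|_Y \rightarrow \| z \|_{S_2}$. Substituting $\| z \|_{S_2} = \| f \|_Y$ into this convergence yields $\| f_n \|_Y \rightarrow \| f \|_Y$, which is the claim; since $\lbrace f_n \rbrace$ was an arbitrary witnessing sequence, the conclusion holds for every such sequence.

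I do not expect a genuine obstacle here: the statement is a direct definitional consequence, the common value $\| z \|_{S_2}$ acting as the bridge between $\| f \|_Y$ and $\lim_n \| f_n \|_Y$. The only point to be a little careful about is the quantifier — the assertion concerns \emph{every} sequence produced by weak norm attainment towards $z$, not just one — but this is automatic, because the convergence $\| f_n \|_Y \rightarrow \| z \|_{S_2}$ is part of what it means for $\lbrace f_n \rbrace$ to be such a sequence, so no extra argument about uniqueness of $z$ or of the approximating sequence is needed.
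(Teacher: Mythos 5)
Your proposal is correct and follows exactly the paper's own argument: extract $\| f \|_Y = \| z \|_{S_2}$ from the $Y$-norm attainment towards $z$, note that any witnessing sequence for weak norm attainment towards $z$ satisfies $\| f_n \|_Y \rightarrow \| z \|_{S_2}$ by definition, and substitute. Your remark about the quantifier is also the right observation, and matches the paper's emphasis on \emph{every} such sequence.
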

\begin{proof}
By definition of weak norm attainment towards $z$ we know that, whenever $\lbrace f_n \rbrace$ is given by weak norm attainment towards $z$, $\|f_n\|_Y \rightarrow \|z\|_{S_2}$. By norm attainment towards the same $z$, we have that $\|f\|_Y=\|z\|_{S_2}$. The conclusion easily follows.
\end{proof}
We note that this Proposition is useful in particular because it assures that \textit{every} sequence given by weak norm attainment towards $z$ is such that its norm on Y tends to the norm of $f$ on Y.\\
We now state an intersting result involving LHW and norm attainment towards a point:
\begin{proposition}\label{Prop:2.8}
Let $f\in$ LHW(X,Y), and suppose that $f$ attains its Y-norm towards some $z \in S_2$ and also weakly attains its (X,Y)-norm towards the same $z$. If there exists at least one sequence $\lbrace f_n \rbrace$ given by both LHW and weak norm attainment towards $z$, then $f \in$ LH(X,Y).
\end{proposition}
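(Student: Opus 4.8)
The plan is to establish the only inequality that is missing, namely $\|f\|_Y \le \|f\|_X$. Indeed, membership of $f$ in $\mathrm{LHW}(X,Y)$ already gives $\|f\|_X \le \|f\|_Y$ by condition (i) of Definition \ref{Def:2.2}, and once both inequalities are in hand we conclude $f \in \mathrm{LH}(X,Y)$ directly from Definition \ref{Def:1.1}. So the whole proof reduces to a one-sided estimate.

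First I would fix, using the hypothesis, a sequence $\lbrace f_n \rbrace$ with $f_n \in Y$ that is \emph{simultaneously} a sequence given by $\mathrm{LHW}$ and a sequence given by weak norm attainment towards $z$; this double role of a single sequence is the crux of the argument. Viewing $\lbrace f_n \rbrace$ as a sequence coming from weak norm attainment towards $z$, we have $\|f_n\|_Y \to \|z\|_{S_2}$; combining this with the assumption that $f$ attains its $Y$-norm towards $z$, so that $\|f\|_Y = \|z\|_{S_2}$ (equivalently, invoking Proposition \ref{Prop:2.7} directly), we obtain $\|f_n\|_Y \to \|f\|_Y$.

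Next, viewing the same $\lbrace f_n \rbrace$ as a sequence given by $\mathrm{LHW}$, it satisfies, for every $\epsilon > 0$ and all large $n$, $\|f_n\|_Y < \epsilon + \|f\|_X$. Letting $n \to \infty$ and using the limit obtained in the previous step yields $\|f\|_Y \le \epsilon + \|f\|_X$ for every $\epsilon > 0$, hence $\|f\|_Y \le \|f\|_X$. Together with $\|f\|_X \le \|f\|_Y$ this gives $\|f\|_X = \|f\|_Y$, i.e. $f \in \mathrm{LH}(X,Y)$, as desired.

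The only genuine subtlety — and thus the "main obstacle", such as it is — is bookkeeping: one must apply the hypothesis precisely where it guarantees a single sequence playing both roles, since feeding the two definitions two different sequences would leave $\|f\|_Y$ and $\|f\|_X$ uncompared. Everything else is a passage to the limit in inequalities, in the same spirit as the proofs of Lemma \ref{Lm:2.2} and Theorem \ref{Thm:2.3}.
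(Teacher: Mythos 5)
Your argument is correct and follows essentially the same route as the paper: it uses $\|f\|_X \le \|f\|_Y$ from the definition of LHW, then exploits the single sequence's double role — the LHW inequality $\|f_n\|_Y < \epsilon + \|f\|_X$ together with $\|f_n\|_Y \to \|f\|_Y$ (via Proposition \ref{Prop:2.7}) — to obtain the reverse inequality. Nothing is missing.
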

\begin{proof}
By the definition of LHW, we have that $\|f\|_X \leq \|f\|_Y$. Furthermore, we know that there is a sequence given by both the said properties (i.e. the sequence is the same and satisfies the conditions of the two properties), say $\lbrace f_n \rbrace$, with $f_n \in Y$, such that, $\forall \epsilon >0$ and for big $n$:
$$ \|f_n \|_Y < \epsilon + \|f\|_X $$
Since $\lbrace f_n \rbrace$ is given both by LHW and by weak norm attainment towards $z$, we can use this inequality together with Proposition \ref{Prop:2.7} (which assures us that every sequence given by weak norm attainment, under the considered assumptions, is such that its norm on Y converges to the norm of $f$ on Y) to get: 
$$\|f\|_Y \leq \|f\|_X$$
This, together with $\|f\|_X \leq \|f\|_Y$, implies that $f \in$ LH(X,Y).
\end{proof}
We conclude this section with the following important Theorem, which gives another interesting way to prove that a function belongs to LH:
\begin{theorem}\label{Thm:2.4}
Suppose that $f \in X \subset Y$ attains its X-norm towards $z \in S_2$, and it also weakly attains its (X,Y)-norm towards the same $z$. If $\exists$ $\lbrace f_n \rbrace$ given by the definition of weak norm attainment towards $z$ such that $\|f- f_n\|_Y \rightarrow 0$, then $f \in$ LH(X,Y).
\end{theorem}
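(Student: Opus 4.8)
The plan is to extract from the hypotheses two different descriptions of the limit of the real sequence $\{\|f_n\|_Y\}$ and match them. First I would invoke the assumption that $f$ attains its X-norm towards $z$; by Definition \ref{Def:2.6} this yields immediately the scalar identity $\|f\|_X = \|z\|_{S_2}$, and that is the only piece of information I will need from the ``strong towards $z$'' hypothesis.

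Next I would turn to the sequence $\{f_n\}$ supplied by the statement. Because it is \emph{given by the definition of weak norm attainment towards} $z$, Definition \ref{Def:2.6} guarantees $\|f_n\|_Y \to \|z\|_{S_2}$. On the other hand, the extra hypothesis $\|f - f_n\|_Y \to 0$ lets me apply Lemma \ref{Lm:2.1} with the unspecified norm there taken to be $\|\cdot\|_Y$ (the lemma's proof uses only the reverse triangle inequality $|\,\|x_n\| - \|x\|\,| \leq \|x_n - x\|$, which holds for every norm), obtaining $\|f_n\|_Y \to \|f\|_Y$.

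Then I would simply combine these. The sequence $\{\|f_n\|_Y\}$ of real numbers converges, so by uniqueness of limits in $\mathbb{R}$ the two computed limit values agree: $\|f\|_Y = \|z\|_{S_2} = \|f\|_X$. Hence $\|f\|_X \equiv \|f\|_Y$, which by Definition \ref{Def:1.1} is exactly the assertion $f \in$ LH(X,Y).

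I do not anticipate any real obstacle: the argument is a short chase through the definitions, in the same spirit as the proof of Lemma \ref{Lm:2.2}. The only point deserving a moment's care is that Lemma \ref{Lm:2.1} must be applied to the $Y$-norm rather than the $X$-norm, and that the two limit values being identified — one arising as $\|z\|_{S_2}$, the other as $\|f\|_Y$ — are limits of \emph{the same} real sequence $\{\|f_n\|_Y\}$; this is exactly what the hypothesis of a single common sequence $\{f_n\}$ (serving both in the weak-attainment-towards-$z$ role and in the convergence condition $\|f-f_n\|_Y\to 0$) provides.
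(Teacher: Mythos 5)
Your argument is correct and coincides with the paper's own proof: both derive $\|f_n\|_Y \to \|z\|_{S_2} = \|f\|_X$ from the two attainment-towards-$z$ hypotheses, derive $\|f_n\|_Y \to \|f\|_Y$ from $\|f-f_n\|_Y \to 0$ via Lemma \ref{Lm:2.1}, and conclude by uniqueness of the limit of the real sequence $\{\|f_n\|_Y\}$. Your explicit remark that a single sequence must serve in both roles is a point the paper leaves slightly implicit, but the substance is identical.
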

\begin{proof}
We know by weak norm attainment towards $z$ that $\exists$ $\lbrace f_n \rbrace$ such that $\|f_n\|_Y \rightarrow \|z\|_{S_2}=\|f\|_X$ (where the last equality follows from norm attainment towards the same $z$). Since there is a sequence given by weak norm attainment such that $\|f- f_n\|_Y \rightarrow 0$, by Lemma \ref{Lm:2.1} we know that $\|f_n\|_Y \rightarrow \|f\|_Y$. But $\lim_{n\rightarrow +\infty} \|f_n\|_Y$ must be unique, and hence $\|f\|_X = \|f\|_Y$, from which follows that $f \in$ LH(X,Y).
\end{proof}
\section{Extension Theorem for LH(X,Y)}
It is natural to ask whether it is possible or not to renorm Y so that X has the same elements of LH(X,Y). We will do this by starting from the norm defined on X, and then extending it to the space Y in a natural way. We first recall some important definitions and results. We mainly refer to [11]; see also [12-17].
\begin{definition}\label{Def:3.1}
Consider a normed space Y and a nonempty subset X of Y. We define the following set valued map $P_X:Y\rightarrow \mathcal{P}(Y)$:
\begin{equation}\label{Eq:3.1}
P_X(y):=\lbrace x \in X : \| y-x\|_Y = d(y,X) \rbrace
\end{equation}
We say that the elements $x$ in $P_X(y)$ are the best approximations (or the nearest points) to $y\in Y$. We call X proximinal if $P_X(y) \neq \emptyset$ $\forall y\in Y$. We call X finite proximinal if it is proximinal and $P_X(y)$ has a finite number of elements $\forall y \in Y$. We call X Chebyshev if it is proximinal and $P_X(y)$ is a singleton set $\forall y \in Y$. Notice that every Chebyshev set is finite proximinal.
\end{definition}
\begin{remark}\label{Rm:3.1}
\normalfont{Notice that: $x \in P_X(y)$ if and only if $\| y-x \|_Y \leq \|y-z\|_Y $ $\forall z \in X$.}
\end{remark}
The main Theorem of this section involves proximinal and Chebyshev spaces, so we state some results that give some examples of these kinds of sets. We briefly recall some important properties; more detailed discussions about them can be found in almost any text on Banach space theory (see, for instance, [9,10]). A subset $S$ of a vector space $V$ is convex if for all $x,y \in S$ and $t \in [0,1]$, the linear (actually affine) combination $(1-t)x + y$ belongs to $S$. A strictly convex space X is a normed space such that:
$$ x \neq y \wedge \|x\|=\|y\|=1 \Rightarrow \|x+y\| < 2 $$
A space is called uniformly convex if for every $\epsilon \in (0,2] $ there is $\delta >0$ such that, for any $x,y$ $|$ $\|x\| \leq 1$, $\|y\| \leq 1$ and $\|x-y\| \geq \epsilon$, one has $\|\frac{x+y}{2}\| \leq 1 - \delta$. A Banach space X is reflexive if every continuous linear functional on X attains its maximum on the closed unit ball in X (this characterisation of reflexivity is known as James' Theorem; for a discussion on James'-type results, see for instance [18]).
\begin{proposition}\label{Prop:3.1}
Every nonempty, closed, convex subset of a reflexive, strictly convex Banach space is a Chebyshev set.
\end{proposition}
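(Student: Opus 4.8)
The plan is to verify the two defining properties of a Chebyshev set in turn: first proximinality (existence of a nearest point), then uniqueness of that point. Throughout, let $X$ be a nonempty, closed, convex subset of a reflexive, strictly convex Banach space $Y$, fix an arbitrary $y\in Y$, and set $d:=d(y,X)=\inf_{x\in X}\|y-x\|_Y$.

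For existence, I would pick a minimizing sequence $\lbrace x_n\rbrace\subset X$ with $\|y-x_n\|_Y\to d$. Such a sequence is norm-bounded, since for $n$ large it lies in the ball of radius $d+1$ centred at $y$; because $Y$ is reflexive, every bounded sequence has a weakly convergent subsequence, so after passing to a subsequence we may assume $x_{n_k}\rightharpoonup x$ for some $x\in Y$. Since $X$ is convex and norm-closed it is weakly closed (Mazur's theorem), hence $x\in X$. Moreover the norm is weakly lower semicontinuous, so $\|y-x\|_Y\le\liminf_k\|y-x_{n_k}\|_Y=d$; combined with $\|y-x\|_Y\ge d$ (valid because $x\in X$), this forces $\|y-x\|_Y=d$, i.e.\ $x\in P_X(y)$. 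As $y$ was arbitrary, $X$ is proximinal.

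For uniqueness, suppose $x_1,x_2\in P_X(y)$. If $d=0$ then $x_1=x_2=y$ and there is nothing to prove, so assume $d>0$ and, arguing by contradiction, $x_1\neq x_2$. Put $u:=(y-x_1)/d$ and $v:=(y-x_2)/d$; these are unit vectors and $u\neq v$, so strict convexity of $Y$ gives $\|u+v\|_Y<2$. Since $\tfrac12(x_1+x_2)-y=-\tfrac{d}{2}(u+v)$, we obtain $\bigl\|\tfrac12(x_1+x_2)-y\bigr\|_Y=\tfrac{d}{2}\|u+v\|_Y<d$. But $\tfrac12(x_1+x_2)\in X$ by convexity, contradicting the fact that $d$ is the infimum of $\|y-\cdot\|_Y$ over $X$. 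Hence $x_1=x_2$, so $P_X(y)$ is a singleton and $X$ is Chebyshev.

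The substantive part of the argument is the existence step, which rests on three standard but nontrivial facts about reflexive Banach spaces: bounded sequences have weakly convergent subsequences, closed convex sets are weakly closed, and the norm is weakly lower semicontinuous. (The first of these is exactly where the James-type characterisation of reflexivity recalled above is used, via the equivalence of reflexivity with weak sequential compactness of the closed unit ball.) The uniqueness step, by contrast, is an immediate consequence of strict convexity once one passes to the midpoint. I therefore expect the main obstacle to be citing the weak-compactness input cleanly; the remaining estimates are routine.
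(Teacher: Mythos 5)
Your proof is correct. Note, however, that the paper does not actually prove this statement: its ``proof'' is a citation to Theorem 2.4.14 of Fletcher's thesis [11], so you have supplied the standard self-contained argument that the reference contains. Both halves of your argument are sound: the existence step (minimizing sequence, boundedness, weak sequential compactness of bounded sets in a reflexive space, weak closedness of closed convex sets via Mazur, and weak lower semicontinuity of the norm) and the uniqueness step (the midpoint computation $\tfrac12(x_1+x_2)-y=-\tfrac{d}{2}(u+v)$ with $\|u+v\|_Y<2$ by strict convexity) are exactly the classical proof. One small quibble with your closing remark: the weak sequential compactness of bounded sequences in a reflexive space is usually obtained from Kakutani's theorem (reflexivity is equivalent to weak compactness of the closed unit ball) together with the Eberlein--\v{S}mulian theorem, rather than from James' theorem; the James-type characterisation recalled in the paper is a different (if equivalent) description of reflexivity and is not the natural input here. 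This does not affect the validity of the proof, since you only use the fact itself, not its derivation.
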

\begin{proof}
See Theorem 2.4.14 in [11].
\end{proof}
\begin{corollary}\label{Crl:3.1}
Every nonempty, closed, convex subset of a uniformly convex Banach space is a Chebyshev set.
\end{corollary}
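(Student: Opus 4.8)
The plan is to deduce the Corollary directly from Proposition \ref{Prop:3.1}, which already establishes the Chebyshev property for nonempty closed convex subsets of Banach spaces that are \emph{both} reflexive and strictly convex. Thus it suffices to prove two implications about a uniformly convex Banach space $Y$: (a) $Y$ is strictly convex; (b) $Y$ is reflexive. Granting these, an arbitrary nonempty closed convex $S\subseteq Y$ is Chebyshev by Proposition \ref{Prop:3.1}, which is exactly the assertion.

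Implication (a) is immediate from the definitions. If $x\neq y$ with $\|x\|=\|y\|=1$, put $\epsilon:=\|x-y\|\in(0,2]$; applying uniform convexity with this $\epsilon$ gives $\delta>0$ with $\|\frac{x+y}{2}\|\le 1-\delta<1$, hence $\|x+y\|<2$. So $Y$ is strictly convex.

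Implication (b) is the Milman--Pettis theorem, and this is where the real work lies. I would argue via Goldstine's theorem: it is enough to show the canonical embedding $J:Y\to Y^{**}$ has dense range, since $J(Y)$ is automatically norm-closed (being isometric onto a Banach space), so density forces $J(Y)=Y^{**}$, i.e. reflexivity. Fix $\xi\in Y^{**}$ with $\|\xi\|=1$ and $\epsilon>0$; let $\delta$ be the modulus of uniform convexity at $\epsilon$, and pick $f\in Y^{*}$ with $\|f\|=1$ and $\xi(f)>1-\delta/2$. The weak-$*$ neighbourhood $V:=\{\eta\in Y^{**}:|\eta(f)-\xi(f)|<\delta/2\}$ of $\xi$ meets $J(B_Y)$ by Goldstine, and any $u,v\in B_Y$ with $Ju,Jv\in V$ satisfy $f(u),f(v)>1-\delta$, hence $\|\frac{u+v}{2}\|\ge \frac12 f(u+v)>1-\delta$; by the contrapositive of uniform convexity, $\|u-v\|<\epsilon$. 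So $\{u\in B_Y: Ju\in V\}$ is nonempty with norm-diameter at most $\epsilon$. Intersecting finitely many such neighbourhoods (taking $\epsilon=1/n$) and choosing a point of $B_Y$ in each resulting nested set yields a norm-Cauchy sequence $\{y_n\}$ in $Y$; its limit $y\in Y$ satisfies $Jy=\xi$, because $\xi$ lies in the weak-$*$ closure of each diameter-shrinking set while $Jy$ lies in all of them as well. This proves $Y$ reflexive, and the Corollary follows.

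The main obstacle is precisely this last argument: the bookkeeping tying together Goldstine density, the choice of $f$ and of the neighbourhoods, and the diameter estimate — in particular the point that the weak-$*$ closure in $Y^{**}$ of a set of small norm-diameter again has small norm-diameter, which uses that closed norm-balls of $Y^{**}$ are weak-$*$ closed. One should also note a minor point of vocabulary: the paper took reflexivity in the James'-theorem form, so before invoking Proposition \ref{Prop:3.1} one recalls that James' characterisation is equivalent to surjectivity of the canonical embedding, which is the form produced above.
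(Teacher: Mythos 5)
Your derivation is correct and is exactly the intended one: the paper states this Corollary without proof, implicitly reducing it to Proposition \ref{Prop:3.1} via the two standard facts that uniform convexity implies strict convexity and (by Milman--Pettis) reflexivity. Your extra work of actually proving Milman--Pettis through Goldstine's theorem is sound (including the key point that closed balls of $Y^{**}$ are weak-$*$ closed), but the paper simply takes that classical result for granted.
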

\begin{corollary}\label{Crl:3.2}
Any nonempty, closed, convex subset of a Hilbert space or of $L^p(\mu)$, equipped with the usual p-norm and with $1 < p < +\infty$, is a Chebyshev set.
\end{corollary}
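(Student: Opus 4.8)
The plan is to obtain this as an immediate consequence of Corollary \ref{Crl:3.1}: I would check that a Hilbert space, and each $L^p(\mu)$ with $1<p<+\infty$, is a uniformly convex Banach space, and then apply that corollary to the given nonempty, closed, convex subset. Completeness is classical in both cases (the Riesz--Fischer theorem for $L^p(\mu)$), so the only real content is the uniform convexity estimate.

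For a Hilbert space $H$ the verification is short and self-contained via the parallelogram law: if $\|x\|\le 1$, $\|y\|\le 1$ and $\|x-y\|\ge\epsilon$, then
$$\Big\|\frac{x+y}{2}\Big\|^2=\frac12\|x\|^2+\frac12\|y\|^2-\Big\|\frac{x-y}{2}\Big\|^2\le 1-\frac{\epsilon^2}{4},$$
so $\big\|\tfrac{x+y}{2}\big\|\le\sqrt{1-\epsilon^2/4}$, which gives uniform convexity with $\delta(\epsilon)=1-\sqrt{1-\epsilon^2/4}>0$. Hence Corollary \ref{Crl:3.1} already yields the claim for Hilbert spaces.

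For $L^p(\mu)$ the corresponding estimate is Clarkson's inequalities, and this is where the real work lies. When $2\le p<+\infty$ I would use the first Clarkson inequality
$$\Big\|\frac{f+g}{2}\Big\|_p^p+\Big\|\frac{f-g}{2}\Big\|_p^p\le\frac12\big(\|f\|_p^p+\|g\|_p^p\big),$$
which, for $\|f\|_p,\|g\|_p\le 1$ and $\|f-g\|_p\ge\epsilon$, yields $\big\|\tfrac{f+g}{2}\big\|_p^p\le 1-(\epsilon/2)^p$, hence $\delta(\epsilon)=1-(1-(\epsilon/2)^p)^{1/p}>0$. When $1<p<2$ the first inequality is no longer available, and I would instead invoke the second Clarkson inequality, with $q=p/(p-1)$ the conjugate exponent,
$$\Big\|\frac{f+g}{2}\Big\|_p^q+\Big\|\frac{f-g}{2}\Big\|_p^q\le\Big(\frac12\|f\|_p^p+\frac12\|g\|_p^p\Big)^{q-1},$$
from which one extracts a $\delta=\delta(\epsilon,p)>0$ in the same fashion.

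The main obstacle is proving (or carefully citing) these two inequalities. The case $p\ge 2$ reduces to the elementary scalar inequality $|a+b|^p+|a-b|^p\le 2^{p-1}(|a|^p+|b|^p)$ integrated over the measure space, and is routine. The case $1<p<2$ is genuinely more delicate: the natural scalar inequality points the wrong way, so one passes to the conjugate exponent and argues via a convexity/duality estimate on the integrand before integrating. Once both Clarkson inequalities are established, uniform convexity of $L^p(\mu)$ follows exactly as above, and a final appeal to Corollary \ref{Crl:3.1} completes the proof.
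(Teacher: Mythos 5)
Your proposal is correct and follows exactly the route the paper intends: Corollary \ref{Crl:3.2} is meant as an immediate consequence of Corollary \ref{Crl:3.1} via the classical uniform convexity of Hilbert spaces (parallelogram law) and of $L^p(\mu)$ for $1<p<+\infty$ (Clarkson's inequalities), together with their completeness. Your estimates, including the split between the first Clarkson inequality for $p\ge 2$ and the second for $1<p<2$ with conjugate exponent $q=p/(p-1)$, are the standard and correct way to fill in the details the paper leaves implicit.
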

\begin{proposition}\label{Prop:3.2}
Any nonempty, closed, convex subset of a reflexive Banach space is proximinal.
\end{proposition}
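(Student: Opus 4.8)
The plan is to run the classical minimizing-sequence argument and to extract a nearest point using weak compactness, which is available precisely because $Y$ is reflexive. No strict convexity will be needed, since we only have to produce \emph{some} best approximation, not a unique one.

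Fix $y\in Y$ and put $d:=d(y,X)=\inf_{x\in X}\|y-x\|_Y$. First I would choose a minimizing sequence $\{x_n\}\subseteq X$ with $\|y-x_n\|_Y\to d$. From $\|x_n\|_Y\le\|y\|_Y+\|y-x_n\|_Y$, together with the fact that a convergent sequence of reals is bounded, the sequence $\{x_n\}$ is bounded in $Y$. Since $Y$ is reflexive, every bounded sequence in $Y$ has a weakly convergent subsequence, so after passing to a subsequence we may assume $x_n\rightharpoonup x$ for some $x\in Y$. Now $X$ is nonempty, closed and convex, hence weakly closed by Mazur's theorem, and therefore $x\in X$. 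Finally, translation by the fixed vector $y$ is weakly continuous, so $y-x_n\rightharpoonup y-x$, and since the norm of $Y$ is convex and norm-continuous it is weakly lower semicontinuous; hence $\|y-x\|_Y\le\liminf_{n\to\infty}\|y-x_n\|_Y=d$. On the other hand $x\in X$ gives $\|y-x\|_Y\ge d(y,X)=d$, so $\|y-x\|_Y=d$, i.e. $x\in P_X(y)$. Thus $P_X(y)\neq\emptyset$, and since $y$ was arbitrary, $X$ is proximinal.

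The only genuine work is in quoting the three functional-analytic ingredients: weak sequential compactness of bounded sets in a reflexive space (via Banach--Alaoglu together with reflexivity, or via the Eberlein--\v{S}mulian theorem), Mazur's theorem that a closed convex set is weakly closed, and the weak lower semicontinuity of the norm. I expect this bundle of standard facts to be the main obstacle only in the bibliographic sense: each can be cited from the references already used for Proposition~\ref{Prop:3.1} (e.g.\ [11]), and everything else is bookkeeping. It is worth stressing that, in contrast with Proposition~\ref{Prop:3.1}, the lack of strict convexity means the argument above gives existence but not uniqueness of the nearest point, which is exactly the difference between being proximinal and being Chebyshev.
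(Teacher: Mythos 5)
Your argument is correct and complete: it is the standard minimizing-sequence proof via weak compactness in a reflexive space, Mazur's theorem, and weak lower semicontinuity of the norm. The paper itself offers no proof, simply citing Proposition 2.4.13 of [11], and that reference's proof is exactly the argument you give, so there is nothing to reconcile.
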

\begin{proof}
See Proposition 2.4.13 in [11].
\end{proof}
\begin{definition}\label{Def:3.2}
Let $(\mathcal{M},\mathcal{A}, \mu)$ be a finite measure space. For a Banach space Y consider (for $1 \leq p < +\infty$) the Banach space of Bochner p-integrable (equivalence classes of) functions on $\mathcal{M}$ with values in Y, and (for $p= +\infty$) the space of essentially bounded (equivalence classes of) functions on $\mathcal{M}$ with values in Y, endowed with the usual p-norm (these spaces are indicated by $L^p(\mathcal{M},Y)$):
\begin{equation}\label{Eq:3.2}
\| f \|_{L^p(\mathcal{M},Y)}:=(\int_{\mathcal{M}} \| f(t) \|^p _Y d \mu(t) )^{1/p}
\end{equation}
for $1 \leq p < +\infty$, and
\begin{equation}\label{Eq:3.3}
\| f \|_{L^\infty (\mathcal{M},Y)}:= \mathop{\mathrm{ess~sup}}\limits_{t \in \mathcal{M}}  \|f(t) \|_Y
\end{equation}
Note that $L^p (\mu)$ is a particular case of this space (just take Y$=\mathbb{R}$, with the usual Euclidean norm $| \cdot |$, and similarly for the complex case).
\end{definition}
\begin{proposition}\label{Prop:3.3}
Let X be a separable, closed, convex subspace of a Banach space Y. Then: $L^p (\mathcal{M},X)$ (separable) is proximinal in $L^p (\mathcal{M},Y)$ if and only if X is proximinal in Y ($1 \leq p \leq +\infty$).
\end{proposition}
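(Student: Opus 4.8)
The plan is to prove the two implications separately, using throughout the pointwise description of the distance to $L^p(\mathcal{M},X)$ and, for the harder direction, a measurable selection of the metric projection.

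\emph{The direction ``$L^p(\mathcal{M},X)$ proximinal $\Rightarrow$ $X$ proximinal.''} Fix $y\in Y$ and let $\widehat{y}\in L^p(\mathcal{M},Y)$ be the constant function $\widehat{y}(t)\equiv y$ (it lies in $L^p(\mathcal{M},Y)$ since $\mu$ is finite). For any $g\in L^p(\mathcal{M},X)$ we have $\|g(t)-y\|_Y\ge d(y,X)$ for a.e.\ $t$, whence $\|\widehat{y}-g\|_{L^p(\mathcal{M},Y)}\ge\mu(\mathcal{M})^{1/p}d(y,X)$ when $p<+\infty$ (and $\ge d(y,X)$ when $p=+\infty$); on the other hand, approximating $y$ by elements of $X$ and passing to constant functions gives $d\bigl(\widehat{y},L^p(\mathcal{M},X)\bigr)\le\mu(\mathcal{M})^{1/p}d(y,X)$ (respectively $\le d(y,X)$), so equality holds. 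If $g_0$ is a best approximation to $\widehat{y}$, then for $p<+\infty$ the identity $\int_{\mathcal{M}}\bigl(\|g_0(t)-y\|_Y^p-d(y,X)^p\bigr)\,d\mu(t)=0$ has a nonnegative integrand, so $\|g_0(t)-y\|_Y=d(y,X)$ for a.e.\ $t$ (for $p=+\infty$ the same follows from $\mathop{\mathrm{ess\,sup}}_t\|g_0(t)-y\|_Y=d(y,X)$); picking any such $t$ gives $g_0(t)\in P_X(y)$, so $P_X(y)\neq\emptyset$ and $X$ is proximinal.

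\emph{The direction ``$X$ proximinal $\Rightarrow$ $L^p(\mathcal{M},X)$ proximinal.''} Fix $f\in L^p(\mathcal{M},Y)$. I first record the pointwise distance formula
\[d\bigl(f,L^p(\mathcal{M},X)\bigr)=\bigl\|\,t\mapsto d(f(t),X)\,\bigr\|_{L^p(\mathcal{M})},\]
whose ``$\ge$'' half is immediate from $\|f(t)-g(t)\|_Y\ge d(f(t),X)$ a.e.; the reverse inequality will drop out once an exact minimizer is produced. To produce one, consider the multifunction $\Phi\colon\mathcal{M}\to\mathcal{P}(X)$, $\Phi(t):=P_X(f(t))$. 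By proximinality of $X$ each $\Phi(t)$ is nonempty, and it is closed since $P_X(f(t))=\{x\in X:\|f(t)-x\|_Y\le d(f(t),X)\}$. Using a countable dense set $\{x_n\}\subset X$ (here the separability of $X$ is used) we get $d(f(t),X)=\inf_n\|f(t)-x_n\|_Y$, which is measurable in $t$ because $f$ is strongly measurable and $\|\cdot\|_Y$ continuous; hence $(t,x)\mapsto\|f(t)-x\|_Y-d(f(t),X)$ is a Carath\'eodory function (measurable in $t$, continuous in $x$), so it is jointly measurable and the graph of $\Phi$ lies in $\mathcal{A}\otimes\mathcal{B}(X)$. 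Since $X$ is closed in the Banach space $Y$ and separable, it is Polish, so the Aumann--von Neumann measurable selection theorem (applied, as we may, to the completion of $(\mathcal{M},\mathcal{A},\mu)$, which does not change the $L^p$ spaces) yields a selection $g\colon\mathcal{M}\to X$, measurable and hence (by Pettis' theorem) strongly measurable, with $g(t)\in P_X(f(t))$ for a.e.\ $t$.

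It remains to verify that $g\in L^p(\mathcal{M},X)$ and that it is a best approximation. Because $X$ is a subspace, $0\in X$, so $d(f(t),X)\le\|f(t)\|_Y$ and therefore $\|g(t)\|_Y\le\|f(t)\|_Y+\|f(t)-g(t)\|_Y=\|f(t)\|_Y+d(f(t),X)\le 2\|f(t)\|_Y$; since $f\in L^p(\mathcal{M},Y)$ this gives $g\in L^p(\mathcal{M},X)$. Then $\|f-g\|_{L^p(\mathcal{M},Y)}=\|\,t\mapsto d(f(t),X)\,\|_{L^p(\mathcal{M})}$, which by the ``$\ge$'' half of the distance formula equals $d\bigl(f,L^p(\mathcal{M},X)\bigr)$; hence $g$ realizes the distance and $L^p(\mathcal{M},X)$ is proximinal in $L^p(\mathcal{M},Y)$. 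The case $p=+\infty$ runs identically with integrals replaced by essential suprema (noting $\|g\|_{L^\infty(\mathcal{M},Y)}\le 2\|f\|_{L^\infty(\mathcal{M},Y)}$). I expect the measurable selection step---establishing that $t\mapsto P_X(f(t))$ is a measurable multifunction with values in a Polish space, so that a selection theorem applies---to be the main obstacle; the remaining estimates are routine manipulations of the $L^p$-norm.
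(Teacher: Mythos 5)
Your argument is correct, and it is genuinely different from what the paper does: the paper offers no proof at all for this proposition, merely citing point (b) of Theorem 1.1 in reference [12] (and remarking on the slightly different definition of proximinality used there). What you have written is essentially the standard self-contained proof of that cited result (in the spirit of Khalil--Saidi and Mendoza): the easy direction via constant functions and the observation that a best approximant to $\widehat{y}$ must land in $P_X(y)$ pointwise a.e., and the hard direction via the pointwise distance formula plus an Aumann--von Neumann measurable selection from $t\mapsto P_X(f(t))$, which is exactly where the separability hypothesis on $X$ earns its keep (it makes $X$ Polish and makes $t\mapsto d(f(t),X)$ measurable via a countable dense set). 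You also correctly handle the two technical points that are easy to overlook: passing to the completion of $(\mathcal{M},\mathcal{A},\mu)$ so that the projection/selection theorem applies, and using $0\in X$ to get the bound $\|g(t)\|_Y\le 2\|f(t)\|_Y$ guaranteeing $g\in L^p(\mathcal{M},X)$. Two cosmetic remarks: in the first direction you implicitly need $\mu(\mathcal{M})>0$ to ``pick any such $t$'' (the statement is vacuous otherwise), and your proof shows that neither the parenthetical separability of $L^p(\mathcal{M},X)$ nor convexity of $X$ (automatic for a subspace) is actually needed beyond the separability and closedness of $X$ itself. The trade-off is clear: the paper's citation is shorter but leaves the reader dependent on [12], while your version makes the role of each hypothesis visible.
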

\begin{proof}
See point b in Theorem 1.1 in [12], and the slightly different definition of proximinality given there (i.e. X proximinal in Y is assumed to be closed and convex, and Y is considered Banach. In our definition, which is the same given in [11], we do not require in general these conditions). See this paper for some other results of this kind.
\end{proof}
We now define two properties that will be used in our main result:
\begin{definition}\label{Def:3.3}
A normed proximinal space X in Y is said to be triangular if:
\begin{equation}\label{Eq:3.4}
\sup_{x \in P_X(y_1 + y_2)} \|x\|_X \leq \sup_{x_1 \in P_X(y_1)} \|x_1 \|_X + \sup_{x_2 \in P_X(y_2)} \|x_2 \|_X
\end{equation}
for every $y_1, y_2 \in Y$ (the suprema are allowed to be infinite, and even in such cases they must satisfy the above inequality).\\
Notice that a normed Chebyshev space is triangular when ($x$ is the unique element in $P_X(y_1 + y_2)$, and $x_1$, $x_2$ are the unique elements in $P_X(y_1)$, $P_X(y_2)$ respectively):
\begin{equation}\label{Eq:3.5}
\|x\|_X \leq  \|x_1 \|_X + \|x_2 \|_X
\end{equation}
for all $y_1, y_2 \in Y$.\\
A normed proximinal space X in Y is bounded if:
\begin{equation}\label{Eq:3.6}
\sup_{x \in P_X(y)} \|x\|_X < +\infty
\end{equation}
$\forall y \in Y$.\\
We notice that a normed Chebyshev space is always bounded proximinal, because $\forall y \in Y$ the unique element in $P_X(y)$ has finite norm (by definition of norm).\\
When we say that a space is triangular proximinal or bounded proximinal, we always tacitly assume that it is normed.
\end{definition}
\begin{example}\label{Example:3.1}
\normalfont{We give a really basic example of bounded triangular proximinal (actually Chebyshev) space. Consider the two spaces $\mathbb{R}$ and $\mathbb{C}$, normed under the usual Euclidean norm $| \cdot |$ over $\mathbb{R}$. We first verify that $\mathbb{R}$ is Chebyshev (and hence bounded proximinal) in $\mathbb{C}$. Consider, for $z \in \mathbb{C}$:}
$$|z-r| \leq |z-r'|$$
\normalfont{for all $r' \in \mathbb{R}$ and for some $r \in \mathbb{R}$. Since:}
$$ |z-r'|=\sqrt{(\Re(z)-r')^2 + \Im^2(z)} $$
\normalfont{if we take $r=\Re(z)$ we have:}
$$ |z-r|=| \Im(z) | \leq \sqrt{(\Re(z)-r')^2 + \Im^2(z)} = |z-r'|, \quad \forall r' \in \mathbb{R} $$
\normalfont{It easily follows that $P_{\mathbb{R}}(z)=\lbrace \Re(z) \rbrace$, whichever is $z \in \mathbb{C}$. Thus $\mathbb{R}$ is Chebyshev in $\mathbb{C}$. It is also clear, by triangle inequality (and by the fact that $\Re(z_1+z_2)=\Re(z_1)+\Re(z_2)$), that:}
$$ | \Re(z_1) + \Re(z_2) | \leq |\Re(z_1)| + |\Re(z_2)| $$
\normalfont{and hence $\mathbb{R}$ is also triangular.\\
It is important to notice that actually $\mathbb{R}$ could be normed with any other norm: indeed, to prove that $\mathbb{R}$ is Chebyshev in $\mathbb{C}$ we do not need to define any norm on the first space; we only need to do it when we want to prove that $\mathbb{R}$ is triangular, but since for any norm on $\mathbb{R}$ we certainly have (by triangular inequality of the norm):
$$ \|\Re(z_1)+\Re(z_2)\|_ {\mathbb{R}} \leq \|\Re(z_1)\|_{\mathbb{R}}+ \|\Re(z_2)\|_{\mathbb{R}}$$
we can conclude that:}
\begin{proposition}\label{Prop:3.4}
$\mathbb{R}$ is always triangular Chebyshev in $\mathbb{C}$ (where $\mathbb{C}$ is normed over $\mathbb{R}$ under the Euclidean norm $| \cdot |$), for every norm on $\mathbb{R}$.
\end{proposition}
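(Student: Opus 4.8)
The plan is to observe that both halves of the claim were already essentially carried out inside Example~\ref{Example:3.1}, and simply to isolate which parts of that computation depend on the choice of norm on $\mathbb{R}$ and which do not. First I would record that the identification $P_{\mathbb{R}}(z)=\{\Re(z)\}$ for every $z\in\mathbb{C}$ used \emph{only} the Euclidean norm on $\mathbb{C}$: for any $r'\in\mathbb{R}$ one has $|z-r'|=\sqrt{(\Re(z)-r')^2+\Im^2(z)}\ge|\Im(z)|=|z-\Re(z)|$, and equality forces $r'=\Re(z)$. No norm on $\mathbb{R}$ enters this step at all, so $\mathbb{R}$ is Chebyshev (hence proximinal, and hence bounded proximinal) in $\mathbb{C}$ no matter how $\mathbb{R}$ is normed; in particular the set-valued map $P_{\mathbb{R}}$ is the same for all choices of $\|\cdot\|_{\mathbb{R}}$.

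Next, fix an arbitrary norm $\|\cdot\|_{\mathbb{R}}$ on $\mathbb{R}$ and invoke the triangularity criterion for Chebyshev spaces from Definition~\ref{Def:3.3}: it suffices to verify $\|x\|_{\mathbb{R}}\le\|x_1\|_{\mathbb{R}}+\|x_2\|_{\mathbb{R}}$, where $x$, $x_1$, $x_2$ are the unique nearest points in $\mathbb{R}$ to $z_1+z_2$, $z_1$, $z_2$ respectively. By the previous step $x_1=\Re(z_1)$, $x_2=\Re(z_2)$, and $x=\Re(z_1+z_2)$. Since the real-part map is $\mathbb{R}$-linear, $\Re(z_1+z_2)=\Re(z_1)+\Re(z_2)$, i.e. $x=x_1+x_2$, so the inequality to be checked is literally $\|x_1+x_2\|_{\mathbb{R}}\le\|x_1\|_{\mathbb{R}}+\|x_2\|_{\mathbb{R}}$, which is the triangle inequality for $\|\cdot\|_{\mathbb{R}}$. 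Since the norm was arbitrary, $\mathbb{R}$ is triangular Chebyshev in $\mathbb{C}$ for every norm on $\mathbb{R}$.

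There is no genuine obstacle here; the only point demanding care is bookkeeping — making sure the computation of $P_{\mathbb{R}}(z)$ is truly norm-free so that the same $P_{\mathbb{R}}$ serves all norms $\|\cdot\|_{\mathbb{R}}$, and noting that boundedness needs no separate check since it is automatic for Chebyshev spaces (as remarked after Definition~\ref{Def:3.3}). One could alternatively bypass the Chebyshev shortcut and bound $\sup_{x\in P_{\mathbb{R}}(y_1+y_2)}\|x\|_{\mathbb{R}}$ directly in \eqref{Eq:3.4}, but as each $P_{\mathbb{R}}(y)$ is a singleton this reduces to exactly the same inequality.
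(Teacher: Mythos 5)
Your proposal is correct and follows essentially the same route as the paper: the paper's Example~\ref{Example:3.1} likewise computes $P_{\mathbb{R}}(z)=\{\Re(z)\}$ using only the Euclidean norm on $\mathbb{C}$ and then reduces triangularity to the triangle inequality of the arbitrary norm on $\mathbb{R}$ via $\Re(z_1+z_2)=\Re(z_1)+\Re(z_2)$. Your explicit remarks that uniqueness of the nearest point follows from equality forcing $r'=\Re(z)$, and that boundedness is automatic for Chebyshev sets, are accurate and slightly more careful bookkeeping than the paper's own exposition.
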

\end{example}
We can finally state and prove the main Theorem of this Section:
\begin{theorem}\label{Thm:3.1}
Let X be a nonempty subset of a normed space Y. Suppose that X is a normed space under any $\| \cdot \|_X$ (in general different from the norm induced by $\| \cdot \|_Y$). If X is a triangular bounded proximinal space in Y, then the following is a seminorm on Y:
\begin{equation}\label{Eq:3.7}
\| f \|_{\widetilde Y} := \sup_{g \in P_X(f)} \|g\|_X
\end{equation}
and it is a norm on the quotient space $\widetilde Y:= Y^{\| \cdot \|}/\mathcal{N}$, where $Y^{\| \cdot \|}$ is Y under the above seminorm, and
$$ \mathcal{N}:= \lbrace g \in Y : \| g \|_{\widetilde Y}=0 \rbrace $$
Furthermore, we have that X=LH(X,$\widetilde Y$).
\end{theorem}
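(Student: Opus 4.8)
The plan is to prove the three assertions in the order they are stated: first that $\|\cdot\|_{\widetilde Y}$ is a well-defined real-valued seminorm on $Y$, then that it induces a genuine norm on the quotient $\widetilde Y=Y^{\|\cdot\|}/\mathcal N$, and finally that, under the canonical identification of $x\in X$ with its class in $\widetilde Y$, one has $X=\mathrm{LH}(X,\widetilde Y)$. Throughout I will use that $X$ is a linear subspace of $Y$ (with the vector operations inherited from $Y$, only the norm being possibly different), which is implicit in the hypothesis that $X$ is a normed space contained in $Y$.

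For the seminorm claim: proximinality gives $P_X(f)\neq\emptyset$, so in \eqref{Eq:3.7} the supremum is over a nonempty set, and the boundedness hypothesis \eqref{Eq:3.6} makes it finite; hence $0\le\|f\|_{\widetilde Y}<+\infty$ for all $f\in Y$, and nonnegativity is automatic. Positive homogeneity is the one point that requires a short computation: for $\lambda\neq 0$ one checks, using that $X$ is a subspace, that $d(\lambda f,X)=|\lambda|\,d(f,X)$ and therefore $P_X(\lambda f)=\lambda\,P_X(f)$, so that $\|\lambda f\|_{\widetilde Y}=\sup_{g\in P_X(f)}\|\lambda g\|_X=|\lambda|\,\|f\|_{\widetilde Y}$; for $\lambda=0$ one uses $0\in X$ together with the fact that $\|\cdot\|_Y$ is a norm to get $P_X(0)=\{0\}$, whence $\|0\|_{\widetilde Y}=\|0\|_X=0$. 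The triangle inequality is \emph{exactly} the definition \eqref{Eq:3.4} of $X$ being triangular. This establishes the seminorm. The passage to the quotient is then the standard one: $\mathcal N=\{g\in Y:\|g\|_{\widetilde Y}=0\}$ is a linear subspace of $Y$ (kernel of a seminorm); the assignment $[g]\mapsto\|g\|_{\widetilde Y}$ is independent of the representative because $\big|\,\|g\|_{\widetilde Y}-\|g'\|_{\widetilde Y}\,\big|\le\|g-g'\|_{\widetilde Y}=0$ whenever $g-g'\in\mathcal N$; it is a seminorm on $\widetilde Y$ by inheritance; and it separates points by the very definition of $\mathcal N$, hence it is a norm on $\widetilde Y$.

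The crux — and the step I would single out as the conceptual heart — is the identity $\|x\|_{\widetilde Y}=\|x\|_X$ for every $x\in X$. Indeed, if $x\in X$ then $d(x,X)=0$, and since $\|\cdot\|_Y$ is a norm (not merely a seminorm) the only $z\in X$ with $\|x-z\|_Y=0$ is $z=x$; thus $P_X(x)=\{x\}$ and $\|x\|_{\widetilde Y}=\sup_{g\in P_X(x)}\|g\|_X=\|x\|_X$. In particular the canonical linear map $X\to\widetilde Y$, $x\mapsto[x]$, is an isometry onto its image, so it is injective (because $\|x\|_X=0\Rightarrow x=0$); this is what legitimizes viewing $X$ as a normed subspace of $\widetilde Y$ and makes $\mathrm{LH}(X,\widetilde Y)$ meaningful in the sense of Definition \ref{Def:1.1}.

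It then only remains to combine two inclusions. The inclusion $\mathrm{LH}(X,\widetilde Y)\subseteq X$ is immediate, since by Definition \ref{Def:1.1} every element of $\mathrm{LH}(X,\widetilde Y)$ already lies in $X$. The reverse inclusion $X\subseteq\mathrm{LH}(X,\widetilde Y)$ is precisely the identity $\|x\|_{\widetilde Y}=\|x\|_X$ proved above: it says that each $x\in X$ maintains its norm when regarded as an element of $\widetilde Y$. Hence $X=\mathrm{LH}(X,\widetilde Y)$. The only real obstacles are bookkeeping: verifying positive homogeneity (which forces one to invoke the subspace structure of $X$ and the scaling of distances), and being careful that it is the quotient $\widetilde Y$, not the raw seminormed space $Y^{\|\cdot\|}$, that carries a norm — indeed $\|f\|_{\widetilde Y}$ can vanish for a nonzero $f\in Y\setminus X$, precisely when $0$ is the unique nearest point of $X$ to $f$, which is exactly what $\mathcal N$ absorbs.
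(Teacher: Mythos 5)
Your proof is correct and follows essentially the same route as the paper's: finiteness from bounded proximinality, the triangle inequality from triangularity, homogeneity via $P_X(\lambda f)=\lambda P_X(f)$, and the key identity $P_X(x)=\{x\}$ for $x\in X$ yielding $\|x\|_{\widetilde Y}=\|x\|_X$ and hence $X=\mathrm{LH}(X,\widetilde Y)$. Your version is slightly more careful than the paper's about the passage to the quotient and about why $X$ embeds injectively into $\widetilde Y$, but these are refinements of the same argument rather than a different approach.
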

\begin{proof}
We start by proving that $\| \cdot \|_{\widetilde Y}$ is a seminorm on $Y$, and thus a norm on $\widetilde Y$. First of all, notice that $\| \cdot \|_{\widetilde Y} : Y \rightarrow [0,+\infty)$ (it cannot be $+\infty$ because X is bounded proximinal). Moreover, $f \equiv 0$ implies that ($g \in X$):
$$ \forall h \in X: \|f-g\|_Y \leq \|f-h\|_Y \Rightarrow \forall h \in X: \|g\|_Y \leq \|h\|_Y $$
and the unique element $g$ in X satisfying this property is $g \equiv 0$. Hence:
$$\| 0 \|_{\widetilde Y} = \|0\|_X = 0$$
The triangle inequality:
$$ \|f_1 + f_2\|_{\widetilde Y} \leq \|f_1\|_{\widetilde Y} + \|f_2\|_{\widetilde Y}$$
is satisfied $\forall f_1, f_2 \in Y$ because X is triangular. We now turn to the last property:
$$ \|a f\|_{\widetilde Y}=|a| \|f\|_{\widetilde Y}$$
whenever $f \in Y$, $a \in \mathbb{R}$. When $a=0$, this obviously holds. For the other cases, we have by definition:
$$\|a f\|_{\widetilde Y} = \sup_{g \in P_X(af)} \|g\|_X$$
As noted in Remark \ref{Rm:3.1}, $g \in P_X(af)$ $\Leftrightarrow$ $\| af-g \|_Y \leq \|af-h\|_Y $ $\forall h \in X$. The second condition can be rephrased in the following way:
$$ \| f-\frac{g}{a} \|_Y \leq \|f-\frac{h}{a}\|_Y ,  \forall h \in X $$
But since we know that $\frac{h}{a}$ is any element of X because it is a (vector) normed space, and the best approximations to $f$ are given in $P_X(f)$, we can conclude that $\frac{g}{a}$ (which is by the above inequality a best approximation to $f$) coincides with an element of $P_X(f)$, say $\widetilde g \equiv \frac{g}{a}$. Therefore, we have that:
\begin{equation}\label{Eq:3.8}
P_X(af)=a P_X(f)
\end{equation}
from which follows that
$$\|a f\|_{\widetilde Y}=|a| \|f\|_{\widetilde Y}$$
Thus, $\| \cdot \|_{\widetilde Y}$ is a seminorm on Y. It is not necessarily a norm, because $\|f\|_{\widetilde Y}=0$ implies that $\sup_{g \in P_X(f)} \|g\|_X =0$, and hence that $g \equiv 0$, but this does not necessarily imply that $f \equiv 0$.\\
We now verify that $\|f\|_X \equiv \|f\|_{\widetilde Y}$ $\forall f \in X$. It is clear that, if $f \in X$, then $P_X(f)= \lbrace f \rbrace$ (this follows from Remark \ref{Rm:3.1} and from the fact that $\|f-g\|_Y=0 \Leftrightarrow f \equiv g$). Therefore:
$$\|f\|_{\widetilde Y}=\sup_{g\in P_X(f)} \|g\|_X = \|f\|_X$$
We now only need to verify that $X \subset \widetilde Y$. Since obviously $0 \in \widetilde Y$, we have to check that there is no $f \in X \setminus \lbrace 0 \rbrace$ : $\|f\|_{\widetilde Y} =0$. But for what we have just proved, this is the same as $\|f\|_X$, which is a norm and hence is equal to $0$ if and only if $f \equiv 0$. Thus, $X\subset \widetilde Y$. We can therefore conclude that X=LH(X,$\widetilde Y$).
\end{proof}
\begin{remark}\label{Rm:3.2}
\normalfont{We explicitely notice that, since a (normed) Chebyshev space is bounded proximinal, the above result also holds when X is (normed) triangular Chebyshev.}
\end{remark}
\section{Conclusion}
In this paper we have introduced a new interesting function space consisting of 'norm maintaining functions'. We have then found an important extension Theorem (Theorem \ref{Thm:3.1}) and we have also proved some interesting results connecting this new space with other known concepts (and in particular with norm attainment). We think that this work can be continued in at least two directions: by finding other extension theorems (using different renormings of Y so that X=LH(X,Y)) and by studying the various relations between LH (and LHW) and all the (generalised) notions of norm attainment (some of them given, for instance, in [1]). \\
\\
\begin{large}
\textbf{Acknowledgements}
\end{large}\\
\\
I thank Miguel Mart\'{i}n for his useful comments on the first version of this paper.\\
\\
\begin{large}
\textbf{References}
\end{large}\\
\\
\begin{small}
$[1]$ G. Choi, Y. S. Choi, M. Mart\'{i}n. \textit{Emerging notions of norm attainment for Lipschitz maps between Banach spaces}. J. Math. Anal. Appl. Volume 483, Issue 1 (2020)\\
$[2]$ B. Cascales, R. Chiclana, L. Garc\'{i}a-Lirola, M. Mart\'{i}n, A. Rueda Zoca. \textit{On strongly norm attaining Lipschitz maps}. J. Funct. Anal. 277 (2019), no. 6, 1677-1717\\
$[3]$ V. Kadets, M. Mart\'{i}n, M. Soloviova. \textit{Norm-attaining Lipschitz functionals}. Banach J. Math. Anal. Volume 10, Number 3 (2016), 621-637\\
$[4]$ A. Dalet, G. Lancien. \textit{Some properties of coarse Lipschitz maps between Banach spaces}. North-West. Eur. J. Math. 3 (2017), 41-62 \\
$[5]$ G. Godefroy. \textit{A survey on Lipschitz-free Banach spaces}. Comment. Math. 55 (2015) 89-118\\
$[6]$ G. Godefroy. \textit{On norm attaining Lipschitz maps between Banach spaces}. Pure Appl. Funct. Anal. 1 (2016), 39-46\\
$[7]$  M. D. Acosta. \textit{Denseness of norm attaining mappings}. RACSAM 100 (2006), 9-30\\
$[8]$ J. Lindenstrauss. \textit{On operators which attain their norm}. Israel J. Math. 1 (1963), 139-148\\
$[9]$ R. E. Megginson. \textit{An Introduction to Banach Space Theory}. New York Berlin Heidelberg: Springer-Verlag (1998)\\
$[10]$ F. Albiac, N. J. Kalton. \textit{Topics in Banach space theory}, Second Edition, Graduate Texts
in Mathematics. Vol. 233, Springer, New York (2016)\\
$[11]$ J. Fletcher. \textit{The Chebyshev Set Problem}. Master's Thesis, University of Auckland (2013)\\
$[12]$ T. Paul. \textit{Various notions of best approximation property in spaces of Bochner integrable functions}. Adv. Oper. Theory. Volume 2, Number 1 (2017), 59-77\\
$[13]$ R. Smarzewski. \textit{Strongly unique best approximation in Banach spaces}. J. Approx. Theory, Volume 47, Issue 3 (1986), 184-194\\
$[14]$ M. Mart\'{i}n. \textit{On proximinality of subspaces and the lineability of the set of norm-attaining functionals of Banach spaces}. J. Funct. Anal. (2019)\\
$[15]$ T. W. Narang, S. Gupta. \textit{Proximinality and co-proximinality in metric linear spaces}. Ann. Univ. Mariae Curie-Sklodowska Sect. A. Volume 69, Number 1 (2015), 83-90\\
$[16]$ E. W. Cheney. \textit{Introduction to approximation theory}, 2nd ed. AMS publishing, Providence (1982)\\
$[17]$ I. Singer. \textit{The theory of best approximation and functional analysis}. Volume 13 of Series in applied mathematics. SIAM, Philadelphia (1974)\\
$[18]$ M. D. Acosta, J. Becerra-Guerrero, M. Ruiz-Galán. \textit{Characterizations of the
reflexive spaces in the spirit of James’ Theorem}. Cont. Math. 3-21 (2003)
\end{small}
\end{document}